\newtheorem*{theorem*}{Theorem}
\newtheorem*{corollary*}{Corollary}
\newtheorem{theorem}{Theorem}
\newtheorem*{conjecture*}{Conjecture}
\newtheorem{proposition}[theorem]{Proposition}
\newtheorem{lemma}[theorem]{Lemma}
\newtheorem{corollary}[theorem]{Corollary}
 \theoremstyle{remark}
\newtheorem{remark}[theorem]{Remark}
\newcounter{jbStepCounter}
\newcommand{\occult}[1]{}
\newcommand\SY[1]{{\noindent\color{red}  #1}}
\newcommand\eps{\varepsilon}
\newcommand\cR{\mathcal R}
\newcommand\RR{{\mathbb R}}
\newcommand\Card{\operatorname{Card}}
\begin{document}
\author{J\'er\^ome Buzzi, Sylvain Crovisier and Omri Sarig}
\thanks{J.B. was partially supported by the ANR project 16-CE40-0013 \emph{ISDEEC}, S.C. by  the  ERC  project 692925 \emph{NUHGD} and O.S. by the ISF grant  1149/18.}
\title[Existence of SRB measures]{Another proof of Burguet's existence theorem for\\    SRB measures of $C^\infty$ surface diffeomorphisms}
\begin{abstract}
Recently,
Burguet  proved  a strong form of Viana's conjecture  on physical measures \cite{V}, in the special case of  $C^\infty$ surface diffeomorphisms. We give another proof, based on  our analysis of entropy and Lyapunov exponents in \cite{BCS}.
\end{abstract}
\maketitle

\newcommand\Basin{\operatorname{Basin}}
\newcommand\wto{\stackrel*\longrightarrow}

\mbox{}
Given a smooth diffeomorphism $f$ of a closed  Riemannian manifold $M$, one would like to build \emph{physical measures}:
 Borel probability measures $\mu$ whose \emph{ergodic basins}
 $$
     \Basin(\mu):=\left\{x\in M: \lim_{n\to\infty}\frac1n\sum_{k=0}^{n-1} \delta_{f^k(x)}\overset{weak-*}{\xrightarrow{\hspace*{1cm}}} \mu\right\}\ \ \ ( \delta_y:=\text{point mass at $y$})
  $$
have positive Riemannian volume. Some diffeomorphisms do not have physical measures \cite{takens}, and a fundamental problem in smooth dynamics is to determine when they exist.
Following positive results on uniformly hyperbolic systems \cite{Ruelle},\cite{Sinai},\cite{Bowen-Ruelle}, for interval maps \cite{Keller}, for some partially or non-uniformly hyperbolic systems \cite{Young-Tower,ABV,BV}, or assuming regularity in the spirit of Oseledets \cite{Tsujii}, Viana~\cite{V} conjectured the following:\footnote{In \cite{V}, the name \emph{SRB measures} is used for what we call \emph{physical measures}.}

\medskip
\noindent
{\bf Viana's Conjecture:} {\em
Suppose a smooth map $f$ has only non-zero Lyapunov exponents at Lebesgue almost every point. Then $f$ has a physical measure.}

\medskip
\noindent
This has led to many works, with some notable recent advances assuming some recurrence properties (in particular, \cite{O,CLP} and the references therein).
\medbreak

Burguet has recently  proved  Viana's conjecture in the special case of $C^\infty$ surface diffeomorphisms. We need some notation to state his result. Suppose $\dim M=2$ and $f\in C^\infty$.
The \emph{upper Lyapunov exponent at $x\in M$} is $\lambda^+(x):=\limsup \tfrac 1 n \log \|Df^n(x)\|$.
The upper Lyapunov exponent of an  $f$-invariant but possibly non-ergodic probability measure $\mu$  is  $\lambda^+(\mu)=\int \lambda^+(x) d\mu$.
An ergodic measure $\mu$ is a \emph{Sina\"\i-Ruelle-Bowen (SRB) measure if
its entropy $h(f,\mu)$ is positive and equal to $\lambda^+(\mu)$.\footnote{The definition of the SRB property varies in the literature. In higher dimension, one may require as in~\cite{Y} that $h(f,\mu)$ coincides with the sum of all the positive Lyapunov exponents of $\mu$ and that it is nonzero.
 But in dimension bigger than two, this allows zero exponents, and it does not imply physicality.}}
 The Ledrappier-Young Theorem says that this  is equivalent to the existence of a system of absolutely continuous conditional measures on local unstable manifolds~\cite{LY}.  In dimension two,  every SRB measure is physical ~\cite{L}.
The result we are interested in is:

\begin{theorem*}[Burguet]
Let $f$ be a $C^\infty$ diffeomorphism on a closed surface such that, on a set of points $x$ with positive Riemannian area,
$\limsup_{n\to\infty} \tfrac 1 n \log \|Df^n(x)\|>0$.
Then $f$ admits an SRB measure.
\end{theorem*}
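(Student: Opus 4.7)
\medskip
\noindent\textbf{Proof plan.} The goal is to produce an ergodic $f$-invariant probability measure $\nu$ with $h(f,\nu)=\lambda^+(\nu)>0$; by definition such $\nu$ is SRB, and by Ledrappier~\cite{L} it is automatically physical. So it suffices to find a single ergodic measure whose entropy matches its positive upper Lyapunov exponent.

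By hypothesis there exists $\alpha>0$ such that $A_\alpha:=\{x\in M:\lambda^+(x)>\alpha\}$ has positive Riemannian area; after intersecting with a Pesin block we may further assume that the Oseledets splitting varies continuously on $A_\alpha$ and that local unstable manifolds have sizes bounded below. Form the empirical measures
\[
\mu_n:=\frac{1}{n}\sum_{k=0}^{n-1}\frac{f^k_{\ast}(\mathrm{Leb}|_{A_\alpha})}{\mathrm{Leb}(A_\alpha)},
\]
and let $\mu$ be any weak-$*$ accumulation point; $\mu$ is $f$-invariant. A standard but slightly delicate computation — combining submultiplicativity of $\|Df^n\|$, Fatou's lemma on $A_\alpha$, and the subadditive convergence $\tfrac{1}{n}\log\|Df^n(x)\|\to\lambda^+(x)$ — yields $\int\lambda^+\,d\mu\ge\alpha$, so a positive $\mu$-mass of ergodic components carry upper Lyapunov exponent at least $\alpha$.

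Ruelle's inequality already gives $h(f,\nu)\le\lambda^+(\nu)$ on every ergodic component $\nu$ of $\mu$; the whole problem is to obtain the matching lower bound on at least one ergodic component with $\lambda^+>0$. This is where the $C^\infty$ hypothesis enters, through the entropy and Lyapunov exponent analysis of \cite{BCS}. That machinery — built on Sarig's symbolic extensions of non-uniformly hyperbolic surface diffeomorphisms together with the Yomdin--Newhouse upper semicontinuity of entropy in the $C^\infty$ category — controls how entropy behaves under weak-$*$ limits of measures whose integrated upper exponent stays bounded away from zero, and provides, for our $\mu$, an ergodic component $\nu$ with $\lambda^+(\nu)\ge\alpha$ and $h(f,\nu)\ge\lambda^+(\nu)$. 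Together with Ruelle this forces $h(f,\nu)=\lambda^+(\nu)>0$, so $\nu$ is an SRB measure.

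The clear obstacle is the last step. The first two steps only produce an invariant limit whose integrated exponent is bounded below by $\alpha$; they say nothing about its entropy, and entropy is only upper semicontinuous under weak-$*$ limits, so it can a priori drop to zero. Matching Ruelle's inequality with a genuine entropy lower bound on a well-chosen ergodic component — and thereby closing the gap between positive exponent and SRB — is exactly what \cite{BCS} delivers, and it is the only place where the $C^\infty$ regularity is really used.
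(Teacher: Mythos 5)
There is a genuine gap, and it sits exactly where you locate it: the entropy lower bound. Your final step is not a citation of an existing result but a restatement of the theorem. Neither \cite{BCS} nor Yomdin--Newhouse semicontinuity gives you an ergodic component $\nu$ of a weak-$*$ limit of $\tfrac1n\sum_k f^k_*(\mathrm{Leb}|_{A_\alpha})$ with $h(f,\nu)\ge\lambda^+(\nu)$; upper semicontinuity of entropy bounds entropy from \emph{above} at limits, never from below, and the limit of your $\mu_n$ could a priori be supported on a zero-entropy set. The paper's proof manufactures the missing lower bound by a volume-versus-complexity count on a \emph{curve}: (i) reduce to a regular curve $\sigma$ transverse to the single possibly non-expanded direction, so a positive-area set with $\lambda^+>0$ yields a positive-length set of parameters $s$ with $\limsup\tfrac1n\log\|Df^n|_\sigma(\sigma(s))\|>0$; (ii) use Pliss plus Borel--Cantelli to extract times $n_k$ at which the set $T_{n_k}$ of parameters expanding at rate $\ge\lambda^{min}_k$ has Lebesgue measure $\ge\rho_k^{n_k}$ with $\rho_k\to1$; (iii) decompose the limiting empirical measure on the projective bundle into a neutral part $\widehat\mu_0$ and a positively expanding part $\beta\widehat\mu_1$, and count the $(N,\eps,\widehat\eps)$-admissible reparametrizations needed to cover $T'_{n_k}$. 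The stretching of the curve forces at least $\rho_k^{2n_k}e^{\lambda^{min}_k n_k}\min\|\sigma'\|$ reparametrizations, while the reparametrization propositions of \cite{BCS} (Yomdin theory during neutral periods and during typical segments of $\widehat\mu_1$) give at most $\exp(\beta h(f,\mu_1)n_k+c(\eta,\gamma,r)n_k)$. Comparing yields $\overline\lambda\le\beta h(f,\mu_1)$, which together with $\beta\lambda^+(\mu_1)=\overline\lambda$ and Ruelle closes the circle. This counting mechanism is the entire content of the argument and is absent from your proposal.

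Two of your preliminary steps are also unjustified. Lebesgue measure is not invariant, so Lebesgue-a.e.\ point of $A_\alpha$ is not known to be Oseledets-regular (only a $\limsup$ is assumed, not a limit), and there is no Pesin block to intersect with --- this is precisely one of the difficulties of Viana's conjecture. Likewise $\tfrac1n\log\|Df^n(x)\|$ need not converge Lebesgue-a.e., so the claimed deduction $\int\lambda^+\,d\mu\ge\alpha$ via Fatou and ``subadditive convergence'' does not go through; the paper sidesteps this by working with the continuous observable $\varphi(x,E)=\log\|Df|_E\|$ on the projectivized bundle and by selecting the times $n_k$ with Pliss's lemma before passing to any limit.
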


\noindent
This  implies Viana's conjecture, in the special case of smooth surface diffeomorphisms.

\medbreak

The goal of this note is to explain how to prove this theorem using some tools we developed in~\cite{BCS}, for the analysis of the continuity properties of entropy and exponents.
Burguet used a different method, which allowed him to also describe the basin of the SRB measures.

\section{Proof of the theorem}
Let $f:M\to M$ be a $C^\infty$ diffeomorphism on a closed smooth surface $M$. For most of the proof we fix $\gamma,\eta>0$ small, $r\geq 2$ a large integer, and work in $C^r$-regularity.
(We will also need a condition $\gamma<\gamma_0(r, f,\eta)$.) At the end we let $\gamma,\eta\to 0$ and $r\to \infty$.

\subsection{Projective dynamics}\label{ss.lifted}
The {\em projective tangent bundle of $M$} is the  manifold $\widehat{M}$  of pairs $\xi=(x,E)$ where $x\in M$
and $E$ is a one-dimensional linear subspace of $T_xM$,  with the Riemannian structure inherited from $TM$.  Let $\pi\colon \widehat M\to M$ be the natural projection.

The diffeomorphism $f$ induces the diffeomorphism 
$\widehat f(x,E)=(f(x),Df(E))$ on $\widehat{M}$.
Let $\varphi\colon \widehat M\to \RR$ be  the continuous function
$$\varphi(x,E)=\log \|Df|_E\|.$$
\begin{lemma}\label{l.lambda-plus-formula}
If $\widehat \mu$ is an $\widehat{f}$-ergodic invariant measure, $\int \varphi d\widehat \mu>0$, and  $\mu:=\pi_*\widehat \mu$ is not supported on a source, then
$\lambda^+(\mu)=\int \varphi d\widehat \mu$.
\end{lemma}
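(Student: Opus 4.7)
The plan is to express $\log\|Df^n|_E\|$ as a Birkhoff sum for $\widehat f$ and to identify the corresponding Birkhoff average with an Oseledets exponent of $\mu:=\pi_*\widehat\mu$. Set $\chi:=\int\varphi\,d\widehat\mu$. Since $E$ is one-dimensional, the chain rule yields $\log\|Df^n|_E\|=\sum_{k=0}^{n-1}\varphi(\widehat f^k(x,E))$, so Birkhoff's ergodic theorem for $(\widehat f,\widehat\mu,\varphi)$ gives $\tfrac1n\log\|Df^n|_E\|\to\chi$ for $\widehat\mu$-a.e.\ $(x,E)$.

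As $\widehat f$ factors over $f$ through $\pi$, the measure $\mu$ is $f$-ergodic. Oseledets' theorem then produces constants $\lambda^-\le\lambda^+$ with $\lambda^+(\mu)=\lambda^+$, and, for $\mu$-a.e.\ $x$, a measurable invariant splitting $T_xM=E^+(x)\oplus E^-(x)$ (trivial when $\lambda^+=\lambda^-$) such that every line $E\ne E^-(x)$ satisfies $\tfrac1n\log\|Df^n|_E\|\to\lambda^+$ while $\tfrac1n\log\|Df^n|_{E^-(x)}\|\to\lambda^-$. The measurable and $\widehat f$-invariant ``bad graph'' $\Gamma:=\{(x,E^-(x)) : \lambda^+>\lambda^-\}\subset\widehat M$ therefore satisfies either $\widehat\mu(\Gamma)=0$, whence $\chi=\lambda^+=\lambda^+(\mu)$ and the lemma follows, or $\widehat\mu(\Gamma)=1$ by ergodicity of $\widehat\mu$, whence $\chi=\lambda^-<\lambda^+$.

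The second alternative must be excluded using the hypothesis that $\mu$ is not supported on a source. In that alternative $\chi=\lambda^-$, and the assumption $\chi>0$ forces $\lambda^->0$, so both Lyapunov exponents of $\mu$ are strictly positive. The plan is to invoke the standard Pesin-theoretic fact that an ergodic $f$-invariant probability measure with all Lyapunov exponents positive is the equidistribution on a periodic source: both exponents of $f^{-1}$ are negative, so Pesin theory applied to $f^{-1}$ provides neighborhoods on which $f^{-n}$ contracts uniformly at $\mu$-typical points; Poincar\'e recurrence combined with a Banach fixed-point argument then produces periodic sources arbitrarily close to each such point, and because periodic sources form a countable $f$-invariant set, ergodicity of $\mu$ concentrates it on a single such orbit, contradicting the hypothesis. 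This reduction from ``all Lyapunov exponents positive'' to ``supported on a periodic source'' is the nontrivial step; the rest of the argument is a direct application of Birkhoff and Oseledets.
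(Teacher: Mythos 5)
Your proof is correct and follows essentially the same route as the ``standard'' argument the paper invokes via the reference to \cite[Lemma 3.3]{BCS}: Birkhoff's theorem for $\varphi$ on the projective bundle, the Oseledets dichotomy according to whether $\widehat\mu$ charges the graph of the slow direction $E^-$, and, in the bad case, the Pesin-theoretic fact that an ergodic measure with all Lyapunov exponents positive is carried by a repelling periodic orbit, contradicting the hypothesis. Since $f$ is $C^\infty$, the $C^{1+\alpha}$ regularity needed for that last step is available, so there is no gap.
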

\noindent
 The (standard) proof can be found e.g. in \cite[Lemma 3.3]{BCS}.

\newcommand\hsigma{\widehat\sigma}
\subsection{Reduction to a curve}\label{s.Reduction-to-curve}
A curve $\sigma\colon [0,1]\to M$ is
\emph{regular} if it is $C^r$, injective, and if the derivative $\sigma'$ never vanishes.
It lifts canonically as a curve $\widehat \sigma\colon [0,1]\to \widehat M$ defined by
$$\widehat \sigma (s):=(\sigma(s),\;\RR.\sigma'(s)).$$
 Fix once and for all a finite atlas for $\widehat{M}$
 so we can define the $C^r$ size as follows.
Suppose $\eps,\widehat\eps\in (0,1)$. We say that $\widehat\sigma$ has \emph{$C^r$ size less than $(\varepsilon,\widehat\varepsilon)$}
if, using the charts from the atlas, the norms of the $k^{\text{th}}$-order derivatives of $\sigma$ with $1\leq k\leq r$
are bounded by $\varepsilon$, and $k^{\text{th}}$-order derivatives of $\widehat \varepsilon$ with  $1\leq k\leq r-1$ are bounded by $\widehat \varepsilon$.

\begin{lemma}\label{l.curve}
There exists a regular  $C^\infty$ curve $\sigma$ such that, for a set of parameters $s\in[0,1]$ with positive Lebesgue measure,
$\limsup \tfrac 1 n \log \|Df^n|_\sigma(\sigma(s))\|>0$ and $\sigma(s)$ is not  $f$-periodic.
\end{lemma}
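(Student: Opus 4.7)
The plan is to produce $\sigma$ as a leaf of a suitable smooth foliation in a chart, then extract the positive-measure parameter set by Fubini. The main difficulty is that we must control the Lyapunov growth \emph{in the specific direction of} $\sigma'(s)$, not merely the operator norm $\|Df^n(\sigma(s))\|$.

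First, by countable additivity applied to $\{\lambda^+>0\}=\bigcup_k\{\lambda^+>1/k\}$, fix $\alpha>0$ such that $B_\alpha:=\{x\in M:\limsup_n\tfrac{1}{n}\log\|Df^n(x)\|>\alpha\}$ has positive Riemannian area. Pick a chart $U$ of a smooth atlas of $M$ in which $B_\alpha\cap U$ has positive area, and equip $U$ with two transverse $C^\infty$ foliations by regular curves (say the horizontal and vertical coordinate lines), with unit tangent vector fields $\tau_1,\tau_2$.

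The key claim is: for every $x\in B_\alpha$, at least one $i\in\{1,2\}$ satisfies $\limsup_n\tfrac{1}{n}\log\|Df^n(x)\tau_i(x)\|\geq\alpha$. To see this, pick $n_k\to\infty$ realizing $\tfrac{1}{n_k}\log\|Df^{n_k}(x)\|>\alpha$, and pass to a subsequence along which the top singular direction $u_k$ of $Df^{n_k}(x)$ converges to some $u_\infty\in T_xM$; then for any $v\in T_xM$ with $\langle v,u_\infty\rangle\neq 0$ one has $\|Df^{n_k}(x)v\|\geq |\langle v,u_k\rangle|\cdot\|Df^{n_k}(x)\|$, whence $\limsup\tfrac{1}{n}\log\|Df^n(x)v\|\geq\alpha$. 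Since $\tau_1(x)$ and $\tau_2(x)$ are linearly independent, at most one of them is orthogonal to $u_\infty$, proving the claim. Setting $B_\alpha^i:=\{x\in B_\alpha\cap U:\limsup\tfrac{1}{n}\log\|Df^n(x)\tau_i(x)\|\geq\alpha\}$, we have $B_\alpha\cap U\subset B_\alpha^1\cup B_\alpha^2$; without loss of generality $B_\alpha^1$ has positive area. Apply Fubini to the first foliation: some leaf $\sigma\colon[0,1]\to U$ has $S:=\{s\in[0,1]:\sigma(s)\in B_\alpha^1\}$ of positive Lebesgue measure, and since $\sigma'(s)\parallel\tau_1(\sigma(s))$, this gives $\limsup_n\tfrac{1}{n}\log\|Df^n|_\sigma(\sigma(s))\|\geq\alpha>0$ for all $s\in S$.

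It remains to discard parameters mapped to $f$-periodic points. For each $k\geq 1$, set $E_k:=S\cap\sigma^{-1}(\{f^k=\mathrm{id}\})$; suppose $|E_k|>0$ and let $s_0\in E_k$ be a Lebesgue density point. Writing $F_k:=f^k-\mathrm{id}$ in local coordinates, the $C^\infty$ function $F_k\circ\sigma$ vanishes on $E_k$, so $(F_k\circ\sigma)'(s_0)=0$; this means $\sigma'(s_0)\in\ker(Df^k(\sigma(s_0))-I)$, hence $Df^{nk}(\sigma(s_0))\sigma'(s_0)=\sigma'(s_0)$ for every $n\geq 0$, which forces $\limsup_n\tfrac{1}{n}\log\|Df^n|_\sigma(\sigma(s_0))\|=0$ and contradicts $s_0\in S$. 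Thus $|E_k|=0$ for every $k$, and $S\setminus\sigma^{-1}(\mathrm{Per}(f))$ still has positive measure, proving the lemma. The main obstacle is the transverse-directions argument at the heart of the key claim; once it is available, the rest is a routine Fubini combined with this short Lebesgue-density argument.
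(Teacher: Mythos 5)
Your proof is correct, and its core mechanism is the same as the paper's: along a subsequence realizing the upper exponent, the singular directions of $Df^{n_k}(x)$ converge, so all tangent directions except possibly one are expanded; one then gets the curve by Fubini. Two implementation details differ, both to your advantage in terms of robustness. First, where the paper selects the exceptional (maximally contracted) direction $F(x)$ pointwise and then needs a compact set $K$ and a smooth line field $L$ transverse to which the curve is taken — a step that implicitly requires a measurable-selection/Lusin-type regularization of $x\mapsto F(x)$ — you instead fix two transverse coordinate foliations once and for all and observe that at each point at least one of the two fixed directions is expanded; this reduces everything to a measurable dichotomy $B_\alpha\cap U\subset B^1_\alpha\cup B^2_\alpha$ and a direct application of Fubini, with no selection issue. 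Second, for discarding periodic parameters the paper asserts that the expansion forces the periodic points of $K\cap\sigma[0,1]$ to be isolated, hence countable; you instead show directly that each $E_k=S\cap\sigma^{-1}(\mathrm{Fix}(f^k))$ is Lebesgue-null via a density-point argument: at a density point $s_0$ of $E_k$ one gets $Df^k(\sigma(s_0))\sigma'(s_0)=\sigma'(s_0)$, whence $\|Df^{nk}|_\sigma(\sigma(s_0))\|=1$ for all $n$ and (by submultiplicativity across the remainders $m\bmod k$) $\limsup\tfrac1m\log\|Df^m|_\sigma(\sigma(s_0))\|=0$, contradicting $s_0\in S$. This yields a null set rather than a countable set, which is all that is needed, and avoids having to justify the isolation claim. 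Both arguments are complete as written; the only points worth making explicit are the measurability of $B^i_\alpha$ (a $\limsup$ of continuous functions) and the orthogonality $Df^{n_k}(x)u_k\perp Df^{n_k}(x)w_k$ underlying the inequality $\|Df^{n_k}(x)v\|\geq|\langle v,u_k\rangle|\,\|Df^{n_k}(x)\|$, which you are implicitly using.
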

\begin{proof} The condition
$\limsup \tfrac 1 n \log \|Df^n(x)\|>0$ implies that $\limsup \tfrac 1 n \log \|Df^n|_E(x)\|>0$ for every one-dimensional linear subspace $E$ of $T_x M$, except  maybe one. Indeed let us consider a sequence $n_k\to +\infty$ which realizes the supremum of $(\tfrac 1 n \log \|Df^n(x)\|)$. Let $F_{n_k}$
be a one-dimensional space of maximal contraction for $Df^{n_k}(x)$. One can assume that $(F_{n_k})$ converges to a space $F$.
If $E\neq F$, then for $k$ large $\|Df^{n_k}|_E(x)\|\geq \frac12\sin\angle(E,F) \| Df^{n_k}(x)\|$.

Hence there exists a compact set $K$ with positive two-dimensional Lebesgue measure and a smooth tangent line field $L$ on an open neighborhood of $K$
such that the expansion $\limsup \tfrac 1 n \log \|Df^n_x|_E\|>0$ holds for any $x\in K$ and any direction $E\subset T_xM\setminus L_x$.
By Fubini's theorem, there exists a smooth curve $\sigma$ transverse to the tangent line field $L$ which intersects $K$ along a set with
positive one-dimensional Lebesgue measure.
The expansion implies that the periodic points in $K\cap \sigma[0,1]$ are isolated, whence  at most countable many.
\end{proof}

Let $\overline \lambda>0$ be the essential supremum of the function $s\mapsto \limsup \tfrac 1 n \log \|Df^n|_\sigma(\sigma(s))\|$. (Here and throughout, $Df^n|_{\sigma}(p):=(Df^n)_p|_{T_p\sigma}$ for $p$ on $\sigma$.)
\begin{lemma}\label{l.subset}
Given $0<\lambda^{min}<\overline \lambda<\lambda^{max}$ and $0\!<\!\rho\!<\!1$, there is $n$ arbitrarily large such that
$$\begin{aligned}
T_n:=\{s\in[0,1]:\;& \lambda^{min}\leq\tfrac 1 n \log \|Df^n|_\sigma(\sigma(s))\|\leq \lambda^{max},\\
& \text{ $\sigma(s)$ is not $f$-periodic} \text{ and }\; \forall 0\leq i\leq n,\;\; \|Df^i|_\sigma(f^{n-i}\sigma(s))\|\geq 1\}
\end{aligned}$$
has  Lebesgue measure larger than $\rho^n$.
\end{lemma}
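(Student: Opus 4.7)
The plan combines a Borel--Cantelli-type counting with the Pliss lemma. Let $g(s):=\limsup_n\tfrac1n\log\|Df^n|_\sigma(\sigma(s))\|$, so $\overline\lambda=\mathrm{ess\,sup}\,g$. \emph{Step 1 (setup).} Fix $\delta>0$ with $\lambda^{min}+\delta<\overline\lambda$. Since $\lambda^{max}>\overline\lambda$, the set $\{g\geq\lambda^{max}\}$ is null, so the monotone convergence $\sup_{n\geq N}\tfrac1n\log\|Df^n|_\sigma(\sigma(s))\|\downarrow g(s)$ (Egorov-type) produces a set $A\subset[0,1]$ of positive Lebesgue measure and an integer $N_0\geq 1$ with $g>\lambda^{min}+\delta$ on $A$ and $\tfrac1n\log\|Df^n|_\sigma(\sigma(s))\|\leq\lambda^{max}$ for every $s\in A$ and $n\geq N_0$. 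Periodic parameters can be ignored: if $s_0$ is an accumulation point of $\sigma^{-1}(\mathrm{Fix}(f^m))$, differentiating $f^m\circ\sigma=\sigma$ along that set forces $Df^m|_\sigma(\sigma(s_0))=1$, hence $g(s_0)=0$ and $s_0\notin A$; the remaining (isolated) periodic parameters form a countable, null set.

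\emph{Step 2 (counting good $N$).} Set $H:=\log\|Df\|_\infty$, $\alpha:=\delta/(H-\lambda^{min})$, fix $\epsilon\in(0,\alpha)$, and let $\rho':=\rho^\epsilon\in(\rho,1)$. Define $E_N:=\{s\in A:\tfrac1N\log\|Df^N|_\sigma(\sigma(s))\|\geq\lambda^{min}+\delta\}$. Every $s\in A$ lies in $E_N$ for infinitely many $N$, so $A\subseteq\bigcup_{N\geq N_1}E_N$ for every $N_1\geq N_0$. If $|E_N|\leq(\rho')^N$ for all $N\geq N_1$, subadditivity gives $|A|\leq(\rho')^{N_1}/(1-\rho')<|A|$ for $N_1$ large, a contradiction. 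Thus arbitrarily large $N$ satisfy $|E_N|>\rho^{\epsilon N}$.

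\emph{Step 3 (Pliss + pigeonhole).} For $s\in E_N$ apply the Pliss lemma to $a_k(s):=\log\|Df|_{Df^k(\sigma(s))\cdot T_{\sigma(s)}\sigma}\|$, $k=0,\dots,N-1$: from $\sum_{k=0}^{N-1}a_k\geq N(\lambda^{min}+\delta)$ and $a_k\leq H$, one obtains at least $\alpha N$ indices $n\in[1,N]$ with $\sum_{k=j}^{n-1}a_k\geq(n-j)\lambda^{min}$ for every $0\leq j\leq n-1$. Taking $j=0$ gives $\tfrac1n\log\|Df^n|_\sigma(\sigma(s))\|\geq\lambda^{min}$, while the other $j$'s, via the substitution $i=n-j$, translate exactly into $\|Df^i|_\sigma(f^{n-i}\sigma(s))\|\geq 1$ for all $0\leq i\leq n$. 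At least $(\alpha-\epsilon)N$ of these indices lie in $[\epsilon N,N]$, so Fubini yields some $n^*$ in this range with
\[
\bigl|\{s\in E_N:n^*\text{ is Pliss for }s\}\bigr|\;\geq\;\tfrac{\alpha-\epsilon}{2}\,|E_N|\;>\;\tfrac{\alpha-\epsilon}{2}\,\rho^{\epsilon N}.
\]
Picking a constant $C=C(\alpha,\epsilon,\rho)$ with $\rho^C\leq(\alpha-\epsilon)/2$ and restricting the pigeonhole to $n^*\in[\epsilon N+C,N]$ makes the right-hand side exceed $\rho^{n^*}$; the upper bound $\tfrac1{n^*}\log\|Df^{n^*}|_\sigma(\sigma(s))\|\leq\lambda^{max}$ is automatic from $s\in A$ and $n^*\geq N_0$, and periodic $s$ form a null set. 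Since $n^*\geq\epsilon N+C\to\infty$ with $N$, this yields arbitrarily large $n$ with $|T_n|>\rho^n$.

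\emph{Main obstacle.} The exponential threshold $\rho^n$ forces a careful calibration: $\epsilon$ must stay strictly below the Pliss density $\alpha$ so that the restriction $n^*\in[\epsilon N,N]$ still captures a positive fraction of Pliss times, yet $\rho'=\rho^\epsilon$ must be close enough to $1$ for the Borel--Cantelli bound $\rho^{\epsilon N}$ to dominate $\rho^{n^*}$ when pigeonhole places $n^*$ close to $N$.
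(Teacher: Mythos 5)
Your proof is correct, and it uses the same two ingredients as the paper (Pliss's lemma and a Borel--Cantelli count), but arranged in a genuinely different way. The paper argues pointwise first: for each parameter $s$ in a positive-measure set, the qualitative form of Pliss's lemma gives infinitely many times $n$ with $s\in T_n$; it then applies the (easy direction of the) Borel--Cantelli lemma \emph{directly to the sets $T_n$}: if $|T_n|\le\rho^n$ for all large $n$ then $\sum_n|T_n|<\infty$ and almost every $s$ would lie in only finitely many $T_n$, a contradiction. This makes your entire Step 3 unnecessary --- no quantitative Pliss density, no pigeonhole over candidate times $n^*$, and no calibration of $\epsilon$ against $\alpha$ and of $\rho^{\epsilon N}$ against $\rho^{n^*}$. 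Your route instead applies Borel--Cantelli to the intermediate sets $E_N$ of parameters with a large Birkhoff average at time $N$, and then must extract a \emph{single} Pliss time $n^*$ shared by a $\rho^{n^*}$-large set of parameters, which is where all of your bookkeeping (the density $\alpha=\delta/(H-\lambda^{min})$, the constant $C$ with $\rho^C\le(\alpha-\epsilon)/2$, the restriction $n^*\in[\epsilon N+C,N]$) comes from. The calibration does close up, so the argument is valid; what the paper's arrangement buys is that the threshold $\rho^n$ never has to be compared with anything except the summability of $\sum\rho^n$ itself. One cosmetic remark: your treatment of periodic parameters in Step 1 re-derives what the paper already packaged into Lemma~\ref{l.curve} (periodic points on $\sigma$ with the expansion property are isolated, hence the periodic parameters form a null set), and the identity $f^m\circ\sigma=\sigma$ you differentiate holds only along the accumulating sequence, so the correct statement is that the difference quotients force $Df^m(\sigma'(s_0))=\sigma'(s_0)$ at an accumulation point $s_0$; this is what you use, and it is fine.
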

\begin{proof}
By Lemma~\ref{l.curve}, if $\varepsilon>0$ is small, then
for $s$ in a set with positive Lebesgue measure, $\sigma(s)$ is not periodic and $\limsup \tfrac 1 n \log \|Df^n|_\sigma(\sigma(s))\|\in [\lambda^{min}+\varepsilon, \lambda^{max}-\varepsilon]$.
By Pliss's lemma (see~\cite{M}) there exists an arbitrarily large  integer $n$ such that
$ \|Df^k|_\sigma(f^{n-k}\sigma(s))\|\geq \exp(\lambda^{min}\cdot k)$ for all $0\leq k\leq n$.
Note also that the inequality $\|Df^n|_\sigma(\sigma(s))\|\leq \exp(\lambda^{max}\cdot n)$ is satisfied for all $n$ large.
Hence $s$ belongs to infinitely many sets $T_n$.
By the Borel-Cantelli lemma, the Lebesgue measure of $T_n$ cannot be  smaller than $\rho^n$ for all large $n$.
\end{proof}

We continue the proof by choosing some sequences $\lambda_k^{min},\lambda^{max}_k\to \overline \lambda$, $\rho_k\to 1$. By Lemma~\ref{l.subset},
there exist $n_k\to \infty$ such that the sets $T_{n_k}$ have  Lebesgue measure larger than $\rho_k^{n_k}$.

\subsection{Empirical measures and  neutral decompositions}\label{s.Neutral-decomposition}
Let $\xi\in \widehat M$ be a point. The $k^\text{th}$ \emph{empirical measure}
and \emph{Birkhoff sum} of $\varphi$ at $\xi$ are
$$p_k(\xi):=\frac 1 {n_k} \sum _{0\leq i\leq n_k-1} \delta_{\widehat f^i(\xi)},
\qquad S_{n_k}\varphi(\xi):=\varphi(\xi)+\varphi(\widehat f(\xi))+\dots\varphi(\widehat f^{n_k-1}(\xi)).$$
So $\tfrac{1}{n_k}S_{n_k}\varphi(\xi)=\int \varphi dp_k(\xi)$.
Note that by definition of $\varphi$ and $T_{n_k}$,
$$\forall \xi\in \widehat \sigma(T_{n_k}),\forall 0\leq i\leq n_k,\;\; S_{i}\varphi(\widehat f^{n_k-i}(\xi))\geq 0.$$

\noindent
\paragraph{\it Neutral orbit segments.}
Suppose  $\alpha>0$ and $L\geq 1$.  An orbit segment $(\zeta,\widehat f(\zeta),\dots,\widehat f^{\ell-1}(\zeta))$
is called  \emph{$\alpha$-neutral}, if
$S_i\varphi(\zeta)\leq \alpha i$ for all $0< i \leq  \ell$.
An $\alpha$-neutral orbit segment of length $\ell\geq L$ is called  \emph{$(\alpha,L)$-neutral}.

The union of two intersecting $\alpha$-neutral segments  is still a neutral segment.
Consequently, the union of all $\alpha$-neutral sub-segments of a given finite orbit segment $(\xi,\widehat f(\xi),\dots,\widehat f^{n_k-1}(\xi))$ is a pairwise disjoint union of maximal $\alpha$-neutral sub-segments.
\smallskip

\noindent
\paragraph{\it Neutral part of an empirical measure.} 
This is the measure
$$
p_k^{\alpha,L}(\xi):=\frac{1}{n_k}\sum_{0\leq i\leq n_k-1} 1_{N_{n_k}^{\alpha,L}(\xi)}(\widehat{f}^i(\xi)) \cdot\delta_{\widehat{f}^i(\xi)},
$$
where $N^{\alpha,L}_{n_k}(\xi)$ is the union of $(\alpha,L)$-neutral sub-segments of $(\xi,\ldots,\widehat{f}^{n_k-1}(\xi))$.
%
%
\bigskip

The following  proposition is a version of \cite[Proposition 6.2]{BCS}. There we were given a sequence of ergodic measures with nonnegative averages,
and we started by constructing empirical measures along generic points. Here we skip the first step and treat the empirical measures as given:

\newcommand\hM{\widehat M}
\newcommand\hf{\widehat f}

\begin{proposition}\label{p.decomposition}
Let $(\xi_k)$ be non-periodic points in $\hM$ satisfying $S_{i}\varphi(\widehat f^{n_k-i}(\xi_k))\geq 0$
for each $0\leq i\leq n_k$.
Up to extracting a subsequence, there exist $\beta\in [0,1]$ and two $\widehat f$-invariant probability measures
$\widehat \mu_0,\widehat \mu_1$ as follows:
\begin{enumerate}[(a)]
\item The sequence of measures $p_k(\xi_k)$ converges to $\widehat \mu:=(1-\beta) \widehat \mu_0 + \beta \widehat \mu_1$  weak-$\ast$ on $\widehat{M}$.
\item For any pair of neighborhoods $\widehat V_0,\widehat V_1$ of $(1-\beta) \widehat \mu_0$  and  $\beta \widehat \mu_1$, if $\alpha>0$ is small and $L$ is large, then for all $k$ large enough, one has
$p_k^{\alpha,L}(\xi_k)\in \widehat V_0$ and $[p_k(\xi_k)-p_k^{\alpha,L}(\xi_k)]\in \widehat V_1$.
\item $(1-\beta)\int \varphi d\widehat\mu_0=0$.
\item For $(\beta\widehat\mu_1)$-almost every point $\xi$, the limit of $\tfrac 1 n S_n\varphi(\xi)$ is positive.
\end{enumerate}
\end{proposition}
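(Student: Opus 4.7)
The plan is to extract weak-$*$ limits of the empirical measures $p_k(\xi_k)$ and of their neutral parts $p_k^{\alpha,L}(\xi_k)$, verify $\widehat f$-invariance, and then use $\alpha$-neutrality together with the tail hypothesis $S_i\varphi(\widehat f^{n_k-i}(\xi_k)) \geq 0$ to control the $\varphi$-integrals on the two pieces. By weak-$*$ compactness of $\mathcal{P}(\widehat M)$ I first pass to a subsequence along which $p_k(\xi_k) \wto \widehat\mu$ for some probability $\widehat\mu$. Fix a countable collection $(\alpha_j, L_j)$ with $\alpha_j \downarrow 0$ and $L_j \uparrow \infty$; each $p_k^{\alpha_j, L_j}(\xi_k)$ is a sub-probability dominated by $p_k(\xi_k)$, so a diagonal extraction produces limits $p_k^{\alpha_j, L_j}(\xi_k) \wto \widehat\nu^j \leq \widehat\mu$. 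Since every $(\alpha_{j+1}, L_{j+1})$-neutral segment is a fortiori $(\alpha_j, L_j)$-neutral, the $\widehat\nu^j$ are nonincreasing sub-probabilities and converge to some $\widehat\nu \leq \widehat\mu$. Setting $\beta := 1 - \|\widehat\nu\|$, $\widehat\mu_0 := \widehat\nu/(1-\beta)$ and $\widehat\mu_1 := (\widehat\mu - \widehat\nu)/\beta$ (the endpoints $\beta \in \{0,1\}$ being degenerate) produces the candidate decomposition.

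\emph{Invariance and parts (a)--(b).} Part (a) is immediate. The defect $\widehat f_*p_k(\xi_k) - p_k(\xi_k)$ has total variation $\leq 2/n_k$, so $\widehat\mu$ is $\widehat f$-invariant. For the neutral part, at most $n_k/L$ maximal $(\alpha,L)$-neutral blocks occur (each of length $\geq L$), so $\widehat f_*p_k^{\alpha,L}(\xi_k) - p_k^{\alpha,L}(\xi_k)$ has mass $O(1/L)$; sending $L \to \infty$ after $k \to \infty$ yields invariance of $\widehat\nu$, hence of $\widehat\mu_0$ and $\widehat\mu_1$. For (b), note that for $\alpha < \alpha_j$ and $L > L_j$, pointwise domination gives $p_k^{\alpha,L}(\xi_k) \leq p_k^{\alpha_j, L_j}(\xi_k)$, while for $\alpha > \alpha_{j+1}$ and $L < L_{j+1}$ the reverse inequality holds; sandwiching between $\widehat\nu^j$ and $\widehat\nu^{j+1}$, both of which converge to $\widehat\nu$ in mass, forces $p_k^{\alpha, L}(\xi_k)$ into any prescribed weak-$*$ neighborhood of $\widehat\nu$ for suitable $\alpha, L, k$. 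The statement for the complement then follows by subtraction from $p_k(\xi_k) \wto \widehat\mu$.

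\emph{Parts (c) and (d).} The easy half of (c) uses neutrality directly: on each maximal $\alpha$-neutral block $[a, a+\ell)$, $\tfrac{1}{\ell}S_\ell\varphi(\widehat f^a \xi_k) \leq \alpha$ by the definition applied at $i = \ell$, hence $\int\varphi\,dp_k^{\alpha,L}(\xi_k) \leq \alpha$; passing to the limits gives $(1-\beta)\int\varphi\,d\widehat\mu_0 \leq 0$. The matching lower bound in (c) and the strict positivity in (d) I would derive together by examining the ergodic decompositions of $\widehat\mu_0$ and $\widehat\mu_1$ and ruling out ``bad'' components: a component of $\widehat\mu_1$ with $\int\varphi \leq 0$ or of $\widehat\mu_0$ with $\int\varphi < 0$ would, by Birkhoff's theorem, have generic points whose forward orbits contain arbitrarily long windows satisfying $S_i\varphi \leq \alpha\, i$ for every $\alpha > 0$; one must then argue that a definite fraction of such windows is actually realized along the $\xi_k$-orbits and must therefore be absorbed into $p_k^{\alpha, L}(\xi_k)$ for suitable $(\alpha, L)$---contradicting, in the first case, the very definition of $\widehat\mu_1$, and, in the second, maximality of the blocks combined with the tail inequality $S_i\varphi(\widehat f^{n_k - i}\xi_k) \geq 0$ which prevents late compensating deficits. \textbf{This rigidity transfer---from a hypothetical bad ergodic component back to concrete sub-segments of $(\xi_k, \widehat f\xi_k, \dots, \widehat f^{n_k-1}\xi_k)$ captured by $p_k^{\alpha,L}$---is the main obstacle}, and is precisely where the decomposition technology of \cite[Proposition~6.2]{BCS} is essential.
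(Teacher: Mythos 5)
Your construction of $\beta$, $\widehat\mu_0$, $\widehat\mu_1$ and your treatment of (a), (b) and of the inequality $(1-\beta)\int\varphi\,d\widehat\mu_0\leq 0$ coincide with the paper's argument (diagonal extraction, monotonicity in $(\alpha,L)$, the $O(1/L)$ invariance defect of the neutral part, the sandwich between $\widehat\nu^{j}$ and $\widehat\nu^{j+1}$). But the two points you flag as open --- the lower bound in (c) and all of (d) --- are genuine gaps, and for (c) the ergodic-component route you propose is not the one that works: the hypothesis $S_i\varphi(\widehat f^{n_k-i}(\xi_k))\geq 0$ must be used block by block, not through Birkhoff averages. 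The paper's argument for (c) is direct. If $(\zeta,\dots,\widehat f^{\ell-1}(\zeta))$ is a maximal $(\alpha,L)$-neutral sub-segment not ending at $\widehat f^{n_k-1}(\xi_k)$, then appending one more iterate must break $\alpha$-neutrality; since neutrality already holds for all $i\leq\ell$, the failure occurs at $i=\ell+1$, whence $S_\ell\varphi(\zeta)>\alpha(\ell+1)-\|\varphi\|_\infty\geq -\|\varphi\|_\infty$. The (at most one) block ending at time $n_k-1$ satisfies $S_\ell\varphi(\zeta)\geq0$ precisely by the tail hypothesis. As there are at most $n_k/L$ blocks, summing gives $\int\varphi\,dp_k^{\alpha,L}(\xi_k)\geq-\|\varphi\|_\infty/L$, and letting $L\to\infty$ yields the missing half of (c). No ergodic decomposition is involved.

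For (d), your description of the ``rigidity transfer'' names the right target but supplies no mechanism; the paper's proof runs as follows. Suppose $\lim\frac1nS_n\varphi\leq0$ on a set of $\beta\widehat\mu_1$-measure $\chi>0$. First fix $\alpha,L$ with $(m_{\alpha,L}-m_0)(\widehat M)<\chi/10$ --- this a priori choice is the step your sketch lacks. Pliss's lemma produces a single length $\ell\geq L$ such that the compact set $K_{\alpha/2,\ell}$ of points lying on some $\alpha/2$-neutral segment of length $\ell$ has $\beta\widehat\mu_1$-measure $>\chi/2$; the analogous set $K_{\alpha,\ell}$ is an open neighborhood of it, so item (b), applied with suitable $\alpha'<\alpha$ and $L'>L$, forces $p_k(\xi_k)-p_k^{\alpha',L'}(\xi_k)$ to give mass $>\chi/3$ to $K_{\alpha,\ell}$ for large $k$. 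Subtracting the mass $<\chi/6$ of $p_k^{\alpha,L}(\xi_k)-p_k^{\alpha',L'}(\xi_k)$ shows that $p_k(\xi_k)-p_k^{\alpha,L}(\xi_k)$ charges $K_{\alpha,\ell}$, i.e.\ some iterate of $\xi_k$ lies on an $\alpha$-neutral sub-segment of length $\ell\geq L$ of the orbit yet outside $N^{\alpha,L}_{n_k}(\xi_k)$. This is impossible, since $N^{\alpha,L}_{n_k}(\xi_k)$ is by definition the union of \emph{all} $(\alpha,L)$-neutral sub-segments. Note that, contrary to your sketch, the tail inequality plays no role in (d); it is consumed entirely by the terminal block in (c).
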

\begin{remark}
The proposition holds for any homeomorphism $\widehat f$ of a compact space $\widehat M$,
for any continuous observable $\varphi$, and for any sequence $n_k\to+\infty$.
\end{remark}
\begin{proof}
Up to taking a subsequence, $(p_k(\xi_k))$ converges to an invariant measure $\widehat \mu$, and,
by a diagonal argument and using the monotonicity with respect to $\alpha$, $(p_k^{\alpha,L}(\xi_k))$ converges for each $\alpha>0$ and $L\geq 1$
to some measure $m_{\alpha,L}$  (see \cite[Claim 6.4]{BCS}).
Again by monotonicity, $m_{\alpha,L}$ converges as $\alpha\to 0$, $L\to +\infty$ to
the measure $m_0:=\inf_{\alpha,L}m_{\alpha,L}$.
Note that $\widehat f_*m_{\alpha,L}-m_{\alpha,L}\to 0$ as $L\to +\infty$. Hence $m_0$ and $m_1:=\widehat \mu-m_0$ are  $\widehat f$-invariant.
We set $\beta:=m_1(\widehat M)$ and $m_0=(1-\beta)\widehat \mu_0$, $m_1=\beta\widehat \mu_1$.
The items (a) and (b) follow.

Let $(\zeta,\widehat f(\zeta),\dots,\widehat f^{\ell-1}(\zeta))$ be a maximal $(\alpha,L)$-neutral segment in
$(\xi,\widehat f(\xi),\dots,\widehat f^{n_k-1}(\xi))$. We have $S_\ell\varphi(\zeta)\leq \alpha\cdot\ell$.
If $\widehat f^{\ell-1}(\zeta)\neq \widehat f^{n_k-1}(\xi)$, then
$\alpha\cdot\ell-\|\varphi\|_\infty<S_\ell\varphi(\zeta)$ by maximality. Otherwise $S_\ell\varphi(\zeta)=S_\ell\varphi(\widehat f^{n_k-\ell}(\xi_k))\geq0$ by our assumption on the points $\xi_k$. Thus this gives $-\|\varphi\|_\infty/L\le p^{\alpha,L}_k(\xi_k)(\varphi)\leq\alpha$ and taking the limit in $\alpha$ and $L$, one gets item (c).

Item (d) is proved by contradiction: we assume $\lim \tfrac 1 n S_n\varphi(\xi)\leq 0$ on a set of points $\xi$ with $\beta\widehat \mu_1$-measure $\chi>0$. We also fix $\alpha>0$ small, $L\geq 1$ large so that the mass $|m_{\alpha,L}-m_0|=(m_{\alpha,L}-m_0)(\hM)$ is less than $\chi/10$.
By Pliss' lemma, there exists $\ell\geq L$ such that the compact set $K_{\alpha/2,\ell}$ of points $\xi$ which belong to some
$\alpha/2$-neutral segment of length $\ell$ has $\beta\widehat \mu_1$-measure larger than $\chi/2$.
The set $K_{\alpha,\ell}$ of points which belong to some $\alpha$-neutral segment of length $\ell$ is a neighborhood of $K_{\alpha/2,\ell}$.
With item (b), one deduces that for some $\alpha'<\alpha$, $L'>L$ and for any $k$ large, $(p_k(\xi_k)-p_k^{\alpha',L'}(\xi_k))$ gives a mass larger than $\chi/3$ to
$K_{\alpha,\ell}$. By our choice of $\alpha,L$, the measure $(p_k^{\alpha,L}(\xi_k)-p_k^{\alpha',L'}(\xi_k))$ has mass less than $\chi/6$, for $k$ large.
Hence $(p_k(\xi_k)-p_k^{\alpha,L}(\xi_k))$ gives a mass larger than $\chi/6$ to $K_{\alpha,\ell}$.
Since $\ell\geq L$ this implies that $(\xi_k,\widehat f(\xi_k),\dots,\widehat f^{n_k-1}(\xi_k))$
contains some $(\alpha,L)$-neutral segment which is not included in the support of $p_k^{\alpha,L}(\xi_k)$.
This contradicts the definition of $p_k^{\alpha,L}(\xi_k)$.
\end{proof}

\subsection{A sequence of subsets $T_{n_k}'$}\label{s.subsets}
%
We endow the space of measures with a metric compatible with the weak-$\ast$ topology, and denote the Lebesgue measure of $T\subset [0,1]$ by $|T|$.

\begin{lemma}\label{l.sequence}
 Up to passing to a subsequence, there exist $\alpha_k\to 0$, $L_k\to\infty$, $\delta_k\to 0$
and subsets $T'_{n_k}\subset T_{n_k}$ with $|T'_{n_k}|> \rho_k^{2n_k}$
such that 
for any $\xi,\xi'\in \widehat \sigma(T'_{n_k})$, 
the empirical measures satisfy 
$d(p_k(\xi),p_k(\xi'))<\delta_k$
and $d(p^{\alpha,L}_k(\xi),p^{\alpha,L}_k(\xi'))<\delta_k$
for all $\alpha=\alpha_j$, $L=L_j$ with $j\leq k$.
\end{lemma}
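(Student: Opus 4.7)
The plan is a pigeonhole argument in the compact space of sub-probability measures on $\hM$, exploiting the freedom to take $n_k$ arbitrarily large in Lemma~\ref{l.subset}. Equip the set $\mathcal{M}^{\leq 1}(\hM)$ of sub-probability measures with a metric $d$ compatible with the weak-$\ast$ topology; since $\hM$ is compact metrizable, $\mathcal{M}^{\leq 1}(\hM)$ is compact and, for every $r>0$, admits a finite covering number $N(r)$ by $d$-balls of radius $r$. Before extracting the subsequence, fix once and for all any auxiliary sequences $\alpha_k\to 0$, $L_k\to +\infty$, $\delta_k\to 0$ (for instance $\alpha_k=\delta_k=1/k$ and $L_k=k$).

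For the extraction, proceed inductively on $k$. Suppose $n_1,\dots,n_{k-1}$ and $T'_{n_1},\dots,T'_{n_{k-1}}$ have been produced. Apply Lemma~\ref{l.subset} to choose $n_k$ as large as we wish, subject to $|T_{n_k}|>\rho_k^{n_k}$; how large is needed will be dictated in a moment. Introduce the measurable map
\[
\Phi_k:T_{n_k}\longrightarrow \bigl(\mathcal{M}^{\leq 1}(\hM)\bigr)^{k+1},\qquad
s\mapsto \bigl(p_k(\widehat\sigma(s)),\,p_k^{\alpha_1,L_1}(\widehat\sigma(s)),\dots,p_k^{\alpha_k,L_k}(\widehat\sigma(s))\bigr),
\]
and cover the target, endowed with the maximum product metric, by $N_k:=N(\delta_k/2)^{k+1}$ balls of radius $\delta_k/2$. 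The $\Phi_k$-preimages of these balls cover $T_{n_k}$, so by pigeonhole one of them, which I call $T'_{n_k}$, has Lebesgue measure at least $|T_{n_k}|/N_k$.

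It remains to enforce $|T'_{n_k}|>\rho_k^{2n_k}$. Since $|T_{n_k}|>\rho_k^{n_k}$, it suffices to have $N_k<\rho_k^{-n_k}$. Here $N_k$ is a fixed finite constant depending only on $k$, $\delta_k$ and the metric $d$, while $\rho_k\in(0,1)$ was chosen back in Section~\ref{s.Reduction-to-curve}; so the inequality holds as soon as $n_k$ is taken large enough, which is permitted by Lemma~\ref{l.subset}. By construction, any $s,s'\in T'_{n_k}$ have $\Phi_k(s)$ and $\Phi_k(s')$ in a common ball of radius $\delta_k/2$, so the desired closeness of the empirical measures and of their neutral parts follows coordinate by coordinate (up to a harmless factor in $\delta_k$ if one wants strict inequality). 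No substantive obstacle appears; the only care needed is to fix $\alpha_k,L_k,\delta_k$ \emph{before} $n_k$, so that the covering number $N_k$ is known before we invoke the ``$n_k$ arbitrarily large'' clause of Lemma~\ref{l.subset}.
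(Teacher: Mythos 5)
Your proof is correct and follows essentially the same route as the paper: cover the compact space of sub-probability measures on $\widehat M$ by finitely many small balls, pigeonhole over the (finitely many) relevant empirical measures and neutral parts, and absorb the covering number by exploiting the freedom to take $n_k$ large so that $|T_{n_k}|/N_k\geq \rho_k^{2n_k}$. The only difference is bookkeeping: you fix $\alpha_k,L_k,\delta_k$ in advance and extract inductively, whereas the paper reduces to a single finite family of pairs $(\alpha,L)$ and a fixed $\delta$ and then diagonalizes.
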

\begin{proof}
It is enough to show that, given
a finite set of pairs $(\alpha,L)$ and $\delta>0$,
for $k$ large, there exists $T'_{n_k}\subset T_{n_k}$ with $|T'_{n_k}|\geq \rho_k^{2n_k}$
such that for each of these pairs $(\alpha,L)$ and any $\xi,\xi'\in \widehat \sigma(T'_{n_k})$, one has $d(p_k(\xi),p_k(\xi'))<\delta$
and $d(p^{\alpha,L}_k(\xi),p^{\alpha,L}_k(\xi'))<\delta$.

The set of measures $m$ with mass $m(\widehat M)\leq 1$ can be covered by some finite number $K$
of balls of radius $\delta/2$.
Hence, for each $k$ there exists a set $T'\subset T_{n_k}$
with $|T'|\geq |T_{n_k}|/K^2$ such that for any $\xi,\xi'\in \widehat \sigma(T)$, each pair of measures
$\{p_k(\xi),p_k(\xi')\}$ and $\{p^{\alpha,L}_k(\xi),p^{\alpha,L}_k(\xi')\}$
lie in some  ball of the covering.
Finally, for $k$ large enough, $|T_{n_k}|/K^2\geq \rho_k^{2n_k}$.
\end{proof}

From now on, we consider a sequence of integers $k\to\infty$ as in the previous Lemma.
We will choose points $\xi_k\in T'_{n_k}$ and obtain the limits  $\widehat \mu:=(1-\beta) \widehat \mu_0 + \beta \widehat \mu_1$ from Proposition~\ref{p.decomposition}.
By Lemma~\ref{l.sequence},  $p_k(\xi_k')\to \widehat \mu$ and $p_k^{\alpha,L}(\xi_k')\to (1-\beta)\widehat \mu_0$ for any 
choice of the 
sequence of $\xi'_k\in T'_{n_k}$.

\begin{corollary}\label{c.decomposition}
One can choose the points $\xi_k\in T'_{n_k}$ so that
the measure $\mu_1:=\pi_*(\widehat \mu_1)$  gives zero measure to all repelling hyperbolic periodic orbits, and 
$\beta \lambda^+(\mu_1)=\overline \lambda>0$. Thus $\beta\neq 0$.
\end{corollary}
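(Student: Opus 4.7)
The plan has three steps: (i) compute $\int\varphi\,d\widehat\mu=\overline\lambda$ to force $\beta>0$; (ii) rule out atoms of $\mu_1$ on any repelling hyperbolic periodic orbit, by contradiction using Lemma~\ref{l.sequence}; (iii) apply Lemma~\ref{l.lambda-plus-formula} to the ergodic components of $\widehat\mu_1$. For (i), the chain rule gives $S_{n_k}\varphi(\widehat\sigma(s))=\log\|Df^{n_k}|_\sigma(\sigma(s))\|$, which by the definition of $T_{n_k}$ lies in $[n_k\lambda_k^{\min},n_k\lambda_k^{\max}]$. Passing to the weak-$*$ limit, $\int\varphi\,d\widehat\mu=\overline\lambda>0$; combined with Proposition~\ref{p.decomposition}(c) this gives $\beta\int\varphi\,d\widehat\mu_1=\overline\lambda$, so $\beta>0$ and $\int\varphi\,d\widehat\mu_1=\overline\lambda/\beta$.

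For (ii), fix a repelling hyperbolic periodic orbit $O$ of period $N$ and suppose for contradiction that $\widehat\mu(\pi^{-1}(O))\geq\varepsilon>0$. Given any open $U\supset O$ in $M$, pick a continuous $g\colon\widehat M\to[0,1]$ with $g\equiv 1$ on $\pi^{-1}(O)$ and $g\leq 1_{\pi^{-1}(U)}$. Weak-$*$ convergence gives $\int g\,dp_k(\widehat\sigma(s_k))\to\int g\,d\widehat\mu\geq\varepsilon$ for any fixed sequence $s_k\in T'_{n_k}$, and Lemma~\ref{l.sequence} propagates this to $p_k(\widehat\sigma(s))(\pi^{-1}(U))\geq\int g\,dp_k(\widehat\sigma(s))\geq\varepsilon/3$ for \emph{every} $s\in T'_{n_k}$ and all large $k$, that is,
\[
T'_{n_k}\;\subset\; B_{n_k}(U):=\bigl\{s\in[0,1]:\#\{0\le i<n_k:f^i\sigma(s)\in U\}\ge(\varepsilon/3)n_k\bigr\}.
\]
The technical core of the argument is the exponential estimate $|B_n(U)|\le C\theta(U)^n$ with $\theta(U)<1$ for $U$ small enough, which contradicts $|T'_{n_k}|\ge\rho_k^{2n_k}$ and $\rho_k\to 1$. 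Thus $\widehat\mu(\pi^{-1}(O))=0$, hence $\mu_1(O)=0$; countability of the repelling hyperbolic periodic orbits completes (ii).

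For (iii), decompose $\widehat\mu_1=\int\widehat\nu\,d\tau(\widehat\nu)$ ergodically. Proposition~\ref{p.decomposition}(d) forces $\int\varphi\,d\widehat\nu>0$ for $\tau$-a.e.\ $\widehat\nu$, and (ii) forces $\pi_*\widehat\nu$ not to be supported on a source for $\tau$-a.e.\ $\widehat\nu$ (such a source would be a repelling periodic orbit, and an ergodic component of $\widehat\mu_1$ projecting to it would contribute an atom to $\mu_1$). Lemma~\ref{l.lambda-plus-formula} then yields $\lambda^+(\pi_*\widehat\nu)=\int\varphi\,d\widehat\nu$ for $\tau$-a.e.\ $\widehat\nu$; integrating gives $\lambda^+(\mu_1)=\int\varphi\,d\widehat\mu_1=\overline\lambda/\beta$, so $\beta\lambda^+(\mu_1)=\overline\lambda$.

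The main obstacle is the exponential-decay estimate $|B_n(U)|\le C\theta(U)^n$. This should follow from a distortion argument: near $O$, $Df^N$ is uniformly expanding on a small neighborhood $U$, so the first return map of $f^N$ to $U$ is expanding with derivative bounded below by some $\lambda>1$ that approaches the smaller eigenvalue of $Df^N(p_0)$ as $U\to O$. Combined with the $C^r$ regularity of $\sigma$, the $\sigma$-preimage of any $m$-visit itinerary through $U$ has length $\le c\lambda^{-m}$; a combinatorial count over itineraries then yields $\theta(U)<1$ (in fact $\theta(U)\to 0$ as $U\to O$), which is far more than enough to contradict $\rho_k^{2n_k}\le|T'_{n_k}|$ given $\rho_k\to 1$.
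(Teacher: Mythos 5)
Your outline coincides with the paper's proof: step (i) is exactly the computation $\overline\lambda=\int\varphi\,d\widehat\mu=\beta\int\varphi\,d\widehat\mu_1$ forcing $\beta>0$; step (ii) is the same contradiction between $|T'_{n_k}|\ge\rho_k^{2n_k}$ and an exponential-decay estimate for the set of parameters whose orbit visits a small neighborhood of a source with definite frequency; step (iii) is the same application of Lemma~\ref{l.lambda-plus-formula} to the ergodic components. Your use of Lemma~\ref{l.sequence} to propagate the mass estimate from one $\xi_k$ to all of $\widehat\sigma(T'_{n_k})$ is also what the paper does (its citation of Lemma~\ref{l.subset} there is a slip for Lemma~\ref{l.sequence}).

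The one place where your argument would fail as written is the route you sketch to the estimate $|B_n(U)|\le C\theta(U)^n$. First, the claim that the $\sigma$-preimage of an arbitrary $m$-visit itinerary has length $\le c\lambda^{-m}$ is false for scattered itineraries: expansion accrues only during the visits, and the excursions between visits can contract arbitrarily, so nothing forces such a preimage to be small. Second, even granting it, a union bound over all $\binom{n}{\varepsilon n/3}$ itineraries costs $e^{H(\varepsilon/3)n}$, which beats $\lambda^{-\varepsilon n/3N}$ when $\varepsilon$ is small, since $\lambda$ is bounded (it tends to the weak eigenvalue of $Df^N(p_0)$, not to $\infty$). The repair is the structural fact you are implicitly using when you invoke the first return map: if $U$ is small enough that $f^N(U)\supset\overline U$, then $f^{-N}(\overline U)\subset U$, so an orbit that leaves $U$ (along a residue class mod $N$) never returns; hence the visit times in each residue class form an initial segment, and visiting $U$ with frequency $\ge\varepsilon/3$ forces a single consecutive run of $f^N$-iterates in $U$ of length $\ge\varepsilon n/(3N^2)$, which confines the point to $f^{-i_0}$ of a $C\lambda^{-\varepsilon n/(3N^2)}$-neighborhood of $O$ for some $0\le i_0<N$. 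This bounds $|B_n(U)|$ by $N$ times an exponentially small quantity with no itinerary count at all, and yields the contradiction with $\rho_k\to1$. (The paper itself asserts the exponential decay of $|S_k|$ without proof, so you are not deviating from it here; but your stated mechanism needs this correction to actually work.)
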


\begin{proof}
By construction $\int \varphi d p_k(\xi_k) \in [\lambda^{min}_k,\lambda^{max}_k]$. Since $(1-\beta)\int\varphi d\widehat \mu_0=0$ by item (c) of Proposition~\ref{p.decomposition}, at the limit one gets
$\overline \lambda=\int \varphi d \widehat \mu=\beta \int\varphi d\widehat \mu_1$. In particular $\beta>0$.

The ergodic components $\widehat \mu'_1$ of $\widehat \mu_1$ satisfy $\int \varphi d\widehat \mu'_1>0$ by Proposition~\ref{p.decomposition} (d).
We claim that their projections $\mu'_1=\pi_*\widehat \mu'_1$ are not supported on repelling  hyperbolic periodic orbits.

 Otherwise there are $\chi>0$ and an  arbitrarily small open neighborhood $U$ of a repelling hyperbolic periodic orbit such that  $\widehat{\mu}(\partial U)=0$, and
$p_k(\xi_k)(U)>\chi$. 
By Lemma \ref{l.subset}, 
$S_k:=\{s\in [0,1],\; p_k(\sigma(s))(U)>\chi/2\}$ contains $T_{n_k}'$ for every $k$ large. But  $|T_{n_k}'|\geq \rho_k^{2k}$ with $\rho_k\to 1$, whereas if $U$ is a sufficiently small, then $|S_k|\to 0$ exponentially, a contradiction.

So $\widehat{\mu}_1$ does not charge sources. By Lemma \ref{l.lambda-plus-formula}, $\lambda^+(\mu'_1)=\int\varphi d\widehat \mu'_1$. So $\lambda^+(\mu_1)=\int\varphi d\widehat \mu_1$.
\end{proof}

\subsection{Reparametrizations: tools}
Let $N\geq 1$ and $\varepsilon,\widehat \varepsilon\in(0,1)$ (to be specified later).
\begin{enumerate}[]
\item A \emph{reparametrization of $\sigma$} is a non-constant affine map $\psi\colon [0,1]\to [0,1]$.

\item A \emph{family of reparametrizations over {a subset} $T\subset[0,1]$} is a collection $\mathcal R$ of reparametrizations such that
$T\subset \bigcup_{\psi\in\mathcal R} \psi([0,1])$.

\item A reparametrization $\psi$ is {\em $(N,\eps,\widehat \eps)$-admissible up to time $n$},  if
there exist $n_0,\dots,n_\ell$ s.t.
\begin{itemize}
\item $n_0=0$, $n_\ell=n$, and $1\leq n_{j}-n_{j-1}\leq N$ for each $1\leq j\leq \ell$,
\item for each $0\leq j\leq \ell$ the curve $f^{n_j}\circ\sigma\circ\psi$ has $C^r$ size less than $(\eps,\widehat \eps)$.
 \end{itemize}
\end{enumerate}

\begin{lemma}
 The following holds for all $\eps,\widehat{\eps}$ small enough. 
 Let $\cR_k$ be a family of reparametrizations of $\sigma$ over $T'_{n_k}$
which are $(N,\varepsilon,\widehat \varepsilon)$-admissible up to time $n_k$. Then
\begin{equation}\label{e.lower}
\Card(\cR_k)\geq  \rho_k^{2n_k} \exp(\lambda^{min}_k\;n_k)\min \|\sigma'\|.
\end{equation}
\end{lemma}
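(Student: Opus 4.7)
The strategy is a length–distortion argument. By the covering hypothesis,
\[
|T'_{n_k}|\;\leq\;\sum_{\psi\in\cR_k}|\psi([0,1])|,
\]
so it suffices to bound $|\psi([0,1])|=|\psi'|$ uniformly from above for those $\psi\in\cR_k$ whose image meets $T'_{n_k}$ (the others only help). I will then compare the resulting bound to $|T'_{n_k}|\geq \rho_k^{2n_k}$ given by Lemma~\ref{l.subset} (see also Lemma~\ref{l.sequence}).

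\textbf{First step.} Fix $\psi\in\cR_k$ with $\psi([0,1])\cap T'_{n_k}\neq\emptyset$, pick $s_0\in\psi([0,1])\cap T'_{n_k}$, and set $t_0:=\psi^{-1}(s_0)\in[0,1]$. The $(N,\varepsilon,\widehat\varepsilon)$-admissibility gives, reading the first-order derivative in the chart coordinates, a uniform upper bound
\[
\bigl\|(f^{n_k}\circ\sigma\circ\psi)'(t_0)\bigr\|\;\leq\;C\,\varepsilon,
\]
where $C$ is a constant depending only on the fixed finite atlas (comparing chart norms and the Riemannian norm). Expanding the chain rule and using that $\psi$ is affine,
\[
\bigl\|(f^{n_k}\circ\sigma\circ\psi)'(t_0)\bigr\|
=|\psi'|\cdot\bigl\|Df^{n_k}|_\sigma(\sigma(s_0))\bigr\|\cdot\|\sigma'(s_0)\|.
\]

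\textbf{Second step.} Since $s_0\in T_{n_k}$, the defining inequality of $T_{n_k}$ gives
\[
\bigl\|Df^{n_k}|_\sigma(\sigma(s_0))\bigr\|\;\geq\;\exp(\lambda^{min}_k\,n_k),
\qquad
\|\sigma'(s_0)\|\;\geq\;\min\|\sigma'\|>0.
\]
Combining the two displays,
\[
|\psi'|\;\leq\;\frac{C\,\varepsilon}{\exp(\lambda^{min}_k\,n_k)\,\min\|\sigma'\|}.
\]

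\textbf{Third step.} Summing over $\psi\in\cR_k$ and using the covering hypothesis together with Lemma~\ref{l.sequence},
\[
\rho_k^{2n_k}\;\leq\;|T'_{n_k}|
\;\leq\;\Card(\cR_k)\cdot \frac{C\,\varepsilon}{\exp(\lambda^{min}_k\,n_k)\,\min\|\sigma'\|},
\]
which rearranges to
\[
\Card(\cR_k)\;\geq\;\frac{1}{C\,\varepsilon}\cdot \rho_k^{2n_k}\exp(\lambda^{min}_k\,n_k)\min\|\sigma'\|.
\]
Choosing $\varepsilon$ (and $\widehat\varepsilon$, which plays no role here beyond being subsumed in the admissibility hypothesis) small enough so that $C\varepsilon\leq 1$ yields \eqref{e.lower}.

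\textbf{Main point of care.} There is no conceptual obstacle; the only thing that matters is the translation between the chart-based $C^r$ size (which is how admissibility is formulated) and the Riemannian norms of $Df^{n_k}|_\sigma$ and $\sigma'$. This is what forces the smallness condition on $\varepsilon,\widehat\varepsilon$ in the statement, and it is handled by the uniform comparison constant $C$ coming from the fixed finite atlas. The higher-order part of admissibility and the parameter $\widehat\varepsilon$ are not used in this lemma; they will be needed later to control distortion along the $\psi$'s.
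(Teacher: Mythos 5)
Your proof is correct and follows essentially the same counting argument as the paper: each admissible reparametrization can only cover a set of parameters of measure about $\exp(-\lambda^{min}_k n_k)$ because the final curve $f^{n_k}\circ\sigma\circ\psi$ has bounded size while $f^{n_k}$ expands $\sigma$ along $T'_{n_k}$, and summing over $\psi$ against $|T'_{n_k}|\geq\rho_k^{2n_k}$ gives the bound. The only (harmless) variation is that you bound $|\psi'|$ via the chain rule at a single point of $T'_{n_k}\cap\psi([0,1])$, whereas the paper bounds the length of $\sigma(T'_{n_k}\cap\psi([0,1]))$ by integrating the expansion over the whole intersection.
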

\begin{proof}
For any reparametrization $\psi$, the curve $f^{n_k}\circ \sigma\circ \psi$ has $C^r$ size less than $(\eps,\widehat\eps)$,
hence 
length  smaller than $1$.
Consequently the set $\sigma(T'_{n_k}\cap \psi[0,1])$ has a length 
smaller than
$\exp(-\lambda^{min}_k\;n_k)$ (by definition of $T_{n_k}$).
Since $\cR_k$ is a family of parameterizations over $T'_{n_k}$,
one gets $\Card(\cR_k)\exp(-\lambda^{min}_k\;n_k)\geq
\sum_{\psi\in \cR_k} |\sigma(T'_{n_k}\cap \psi[0,1])|\geq
|\sigma(T'_{n_k})|\geq \min \|\sigma'\| \cdot |T'_{n_k}|. $
\end{proof}

Next we present some tools for constructing reparametrizations.
To do this 
we need some notation and a couple of more definitions. Balls in $\widehat{M}$ with center $\xi$ and radius $r$ will be denoted by $B(\xi,r)$.  We let $\|D\widehat{f}^n\|_{\text{sup}}:=\sup_{\xi\in\widehat{M}}\|D\widehat{f}^n_\xi\|$, and define
the \emph{asymptotic dilation} 
$$\lambda(\widehat f):=\lim\limits_{n\to\infty}\tfrac 1 n \log \|D\widehat f^n\|_\text{sup}.$$
The \emph{supremum entropy $\overline h(f,\mu)$} of an invariant measure $\mu$
is the essential supremum of the entropies of its ergodic components,  with respect to the natural measure on the space of ergodic components.

%

The following result  is a version of Yomdin's theorem \cite{Yo} adapted to our setting (see~\cite[Theorem 4.13]{BCS}).
We recall that we have fixed $r\geq 2$.

\begin{theorem}[Yomdin]\label{t-yomdin}
There exist $\Upsilon>0$ (depending only on $r$) and $\varepsilon_0>0$ (depending only on $r$ and $f$) such that
for every $\eps,\widehat\eps\in (0,\eps_0)$,
every regular curve $\sigma_0$ with $C^r$ size less than $(\eps,\widehat\eps)$
and every $s\in [0,1]$,
there is a family $\cR$ of reparametrizations of $\sigma_0$ over the set
$$
   T:=\{t\in[0,1]:f(\sigma_0(t))\in B(\sigma_0(s),\eps)\text{ and }
   \hf\circ\widehat \sigma_0(t)\in B(\widehat \sigma_0(s),\widehat \eps)\}
 $$
so that  the curves $f\circ\sigma_0\circ\psi$ have $C^r$ size less than $(\eps,\widehat \eps)$ for each $\psi\in\cR$, and 
$$
\Card(\cR)\leq \Upsilon\|D\widehat f\|_{\rm sup}^{1/(r-1)}.
$$ 
\end{theorem}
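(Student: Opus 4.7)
The plan is to invoke the classical Yomdin--Gromov reparametrization lemma, applied to the lifted curve $\gamma:=\hf\circ\widehat{\sigma_0}$. The lift loses one order of regularity (so $\gamma$ is only $C^{r-1}$), but controlling its $C^{r-1}$ norm will simultaneously control the tangent direction of $f\circ\sigma_0$, and hence $f\circ\sigma_0$ itself in $C^r$. This explains the exponent $1/(r-1)$ in the count.

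In more detail, I would first localize: using the fixed atlas on $\widehat M$, choose a chart about $\hf\circ\widehat{\sigma_0}(s)$ and identify $\gamma$ with a $C^{r-1}$ map $[0,1]\to\RR^3$; the target set $T$ becomes contained in $\gamma^{-1}$ of an appropriate ball. By Fa\`a di Bruno and the smallness of $\widehat\eps$, the dominant term in $\gamma^{(k)}$ for $1\leq k\leq r-1$ is $D\hf\cdot\widehat{\sigma_0}^{(k)}$, so $\|\gamma^{(k)}\|_\infty\leq C_f\,\|D\hf\|_{\rm sup}\,\widehat\eps$, with the cross-terms of Fa\`a di Bruno absorbed by taking $\eps_0$ small. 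Next, partition $[0,1]$ into affine pieces $\psi_j\colon[0,1]\to[t_j,t_{j+1}]$ of length $a_j$; then $(\gamma\circ\psi_j)^{(k)}=a_j^k\,\gamma^{(k)}\circ\psi_j$. The binding constraint is $k=r-1$, which demands $a_j^{r-1}\,\|D\hf\|_{\rm sup}\,\widehat\eps\lesssim\widehat\eps$, i.e.\ $a_j^{-1}\lesssim\|D\hf\|_{\rm sup}^{1/(r-1)}$; hence at most $\Upsilon\,\|D\hf\|_{\rm sup}^{1/(r-1)}$ pieces are needed. A standard inductive pigeonholing argument (the heart of Yomdin's lemma) ensures that the intermediate derivatives $1\leq k<r-1$ can be simultaneously brought below $\widehat\eps$ at the cost of only a constant factor depending on $r$.

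Finally, I would check that $C^{r-1}$ control on $\gamma\circ\psi_j$ yields $C^r$ size less than $(\eps,\widehat\eps)$ for $f\circ\sigma_0\circ\psi_j$: the first component of $\gamma\circ\psi_j$ is $f\circ\sigma_0\circ\psi_j$, which gives its derivatives of orders $1,\dots,r-1$ directly, while the second (direction) component's $(r-1)$-th derivative captures $(f\circ\sigma_0\circ\psi_j)^{(r)}$, using the lower bound on $|(\sigma_0\circ\psi_j)'|$ coming from regularity. Shrinking $\eps_0=\eps_0(r,f)$ absorbs the atlas and normalization constants, delivering the required $(\eps,\widehat\eps)$ bounds. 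The main technical obstacle is the inductive balancing step that treats all orders $1\leq k\leq r-1$ uniformly --- this is the core combinatorial content of Yomdin's theorem, and the constant $\Upsilon$ depending only on $r$ emerges from this combinatorics; the rest is careful bookkeeping.
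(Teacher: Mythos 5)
The paper does not actually prove this statement: it is quoted from \cite[Theorem~4.13]{BCS}. Your overall strategy --- reparametrize the lifted curve $\hf\circ\widehat\sigma_0$ in $\widehat M$, where only $r-1$ derivatives are controlled, so that the one-dimensional Yomdin--Gromov mechanism produces the exponent $1/(r-1)$ --- is indeed the strategy of that cited proof. But two steps of your sketch would fail as written.

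First, the two scales cannot be merged into a single chart ball of $\widehat M$. The $C^r$ size and the set $T$ are anisotropic: the position curve $f\circ\sigma_0\circ\psi$ must have derivatives of orders $1,\dots,r$ bounded by $\eps$, the direction curve of orders $1,\dots,r-1$ bounded by $\widehat\eps$, and in the intended application $\eps$ is much smaller than $\widehat\eps$ (see the remark after Proposition~\ref{prop1}). Controlling $\gamma\circ\psi_j$ inside a ball of radius comparable to $\widehat\eps$ only bounds the position derivatives by $\widehat\eps$, which is not enough; the position component has to be run through the reparametrization argument at its own scale $\eps$ and top order $r$ (contributing $O(\|Df\|_{\rm sup}^{1/r})$ pieces, absorbed into $\Upsilon\|D\hf\|_{\rm sup}^{1/(r-1)}$ since $\|D\hf\|_{\rm sup}\geq c\,\|Df\|_{\rm sup}\geq c$). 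Second, the direction component does \emph{not} capture $(f\circ\sigma_0\circ\psi)^{(r)}$. Writing $c=f\circ\sigma_0\circ\psi$ and $\theta=c'/|c'|$, one has $c''=|c'|'\,\theta+|c'|\,\theta'$, and in general the derivatives of the speed $|c'|$ --- the tangential components of $c^{(k)}$ --- are invisible to the lift $\widehat c$; no lower bound on $|c'|$ converts $C^{r-1}$ control of $\widehat c$ into control of $c^{(r)}$. (In fact $c^{(r)}$ is the easy term: Fa\`a di Bruno gives $\|c^{(r)}\|\leq a^r\,C(f,r)\,\eps\leq\eps$ after a bounded extra subdivision; it is the \emph{low}-order position derivatives, starting with $c'$ whose naive bound is $a\|Df\|_{\rm sup}\eps$, that require the Yomdin induction at scale $\eps$.) Relatedly, asserting $\|\gamma^{(k)}\|\lesssim\|D\hf\|_{\rm sup}\widehat\eps$ for \emph{all} $1\leq k\leq r-1$ and then calling $k=r-1$ the binding constraint is inconsistent: with those bounds and a uniform subdivision, $k=1$ would be binding and would force $\sim\|D\hf\|_{\rm sup}$ pieces. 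The correct mechanism, which your appeal to the inductive pigeonholing only gestures at, uses the top-order bound alone a priori and recovers the intermediate derivatives from the constraint that the image stays in the prescribed small ball.
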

The next two propositions, taken from~\cite[Section 5]{BCS}, provide small families of admissible  parametrizations over  certain dynamically defined subsets of $\sigma_0$. (We state them for $f\in C^\infty$, but they only require $f\in C^r$, $r\geq 2$.)

\begin{proposition}[Reparametrization during neutral periods]\label{prop1}
The following holds for some $\gamma_0(r,f,\eta)>0$. 
For all  $0<\gamma\leq \gamma_0(r, f,\eta)$, for all  $N$ large enough,
and for any $\widehat f$-invariant probability measure $\widehat \mu_0$,
there exist $\varepsilon,\widehat \eps>0$ arbitrarily small, $\bar n_0\geq 1$,
and an open set $\widehat U_0$ such that $\widehat \mu_0(\widehat U_0)>1-\gamma^2$,
$\widehat \mu_0(\partial \widehat U_0)=0$ and:

For any regular curve $\sigma_0$ with $C^r$ size less than $(\varepsilon,\widehat\varepsilon)$,
and any $n\geq \bar n_0$, there is a family $\cR$ of reparametrizations of $\sigma_0$ over the set
$$\widehat\sigma_0^{-1}\bigg\{\xi:\; (\xi,\dots,\widehat f^{n-1}(\xi)) \text{ is $\tfrac \eta{10}$-neutral and }
\Card\{\widehat f^j(\xi)\in \widehat U_0,\; 0\leq j<n\}>(1-\gamma)n\bigg\}$$
which is $(N,\varepsilon,\widehat \eps)$-admissible up to time $n$
and has cardinality 
$\Card(\cR)\leq \exp\big[n(\tfrac{\lambda(\widehat f)}{r-1}+\eta)\big]. $
\end{proposition}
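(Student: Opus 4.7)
The plan is to construct $\cR$ by iterating Yomdin's theorem (Theorem~\ref{t-yomdin}) at intervals of size $N$, using the $\tfrac{\eta}{10}$-neutrality hypothesis to control the length growth of the curves along the tangent direction. The cardinality bound will follow by choosing $N$ large enough that the Yomdin constant $\Upsilon$ becomes negligible per unit time and $\|D\widehat f^N\|_{\sup}^{1/N}$ approaches $\exp(\lambda(\widehat f))$. The set $\widehat U_0$ can be chosen freely subject to the stated measure conditions (in fact $\widehat U_0=\widehat M$ already satisfies them); its appearance in the statement is for later coupling with $\widehat\mu_0$-generic orbits.

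\textbf{Parameters.} By the definition of $\lambda(\widehat f)$, pick $N$ large enough that $\|D\widehat f^N\|_{\sup}^{1/N}<\exp(\lambda(\widehat f)+\eta/4)$ and $\Upsilon^{1/N}<\exp(\eta/4)$. By regularity of $\widehat\mu_0$, choose an open set $\widehat U_0$ with $\widehat\mu_0(\widehat U_0)>1-\gamma^2$ and $\widehat\mu_0(\partial\widehat U_0)=0$. Then pick $\varepsilon,\widehat\varepsilon<\varepsilon_0$ (from Theorem~\ref{t-yomdin} applied to $\widehat f^N$) small enough so the iteration closes up, and $\bar n_0$ large enough to absorb the resulting multiplicative constants.

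\textbf{Inductive construction.} Set $n_j=jN$ (truncating the final step). Starting from $\cR_0=\{\mathrm{id}\}$, suppose we have built $\cR_j$ so that each $f^{n_j}\circ\sigma_0\circ\psi$ has $C^r$ size at most $(\varepsilon,\widehat\varepsilon)$. Fix such $\psi$ and set $\sigma_j:=f^{n_j}\circ\sigma_0\circ\psi$. Along parameters $t$ where the orbit $(\widehat\sigma_0(\psi(t)),\widehat f(\widehat\sigma_0(\psi(t))),\dots)$ is $\tfrac{\eta}{10}$-neutral, the chain rule gives
$$|(f^{n_j+i}\circ\sigma_0\circ\psi)'(t)|=\|Df^{n_j+i}|_{E(t)}\|\cdot|(\sigma_0\circ\psi)'(t)|\leq\varepsilon\exp(i\eta/10)$$
for $0\leq i\leq N$, where $E(t)$ is the tangent line at $\widehat\sigma_0(\psi(t))$. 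Thus the neutral portion of $f^N\circ\sigma_j$ has length at most $\varepsilon\exp(N\eta/10)$. Cover it by $O(\widehat\varepsilon^{-1}\exp(N\eta/10))$ balls of size $(\varepsilon,\widehat\varepsilon)$ in $\widehat M$ and apply Theorem~\ref{t-yomdin} for $\widehat f^N$ to each. The composed reparametrizations yield $\cR_{j+1}$ with
$$\Card(\cR_{j+1})\leq\Card(\cR_j)\cdot O(\widehat\varepsilon^{-1}\exp(N\eta/10))\cdot\Upsilon\|D\widehat f^N\|_{\sup}^{1/(r-1)}.$$
Iterating over $n/N$ steps, the choices of $N$ and $\bar n_0$ give $\Card(\cR)\leq\exp[n(\lambda(\widehat f)/(r-1)+\eta)]$.

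\textbf{Main difficulty.} Restricting the final family to parameters whose orbits meet $\widehat U_0$ at least $(1-\gamma)n$ times produces the $\cR$ of the statement. The most delicate point is to ensure that the full $C^r$ sizes (not just lengths) of the iterated curves close up uniformly at $(\varepsilon,\widehat\varepsilon)$ through the iteration, which is exactly the role of Theorem~\ref{t-yomdin}. The key observation making the cardinality bound clean is that $\eta/10$-neutrality controls the tangent expansion by $\exp(N\eta/10)$ rather than $\|D\widehat f^N\|_{\sup}$, so the length-covering factor per step is comparable to $\exp(N\eta/10)$ and the overall exponential rate remains $\lambda(\widehat f)/(r-1)+O(\eta)$.
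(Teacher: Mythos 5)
The paper does not actually prove Proposition~\ref{prop1}; it imports it from \cite[Section 5]{BCS}. Your attempt therefore has to stand on its own, and it has a genuine gap located exactly at the point you dismiss. You assert that $\widehat U_0=\widehat M$ already works and that the visit-frequency hypothesis is ``for later coupling.'' If that were so, the proposition would assert that \emph{every} $\tfrac\eta{10}$-neutral segment admits an admissible family of cardinality $\exp[n(\lambda(\widehat f)/(r-1)+\eta)]$, with no reference to $\widehat\mu_0$ at all. But admissibility requires the \emph{lifted} curves $\widehat f^{\,n_j}\circ\widehat\sigma_0\circ\psi$ to have $C^{r-1}$ size at most $\widehat\varepsilon$, hence length at most $\widehat\varepsilon$, so $\Card(\cR)$ is bounded below by the length of the lifted image curve divided by $\widehat\varepsilon$. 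Neutrality controls only the tangential expansion $\|Df^i|_E\|\le e^{i\eta/10}$, i.e.\ the base curve; it says nothing about the fiber direction of $\widehat M$, where the derivative of the projective action at $(x,E)$ is of order $|\det Df^i_x|/\|Df^i_x|_E\|^2$ and can be exponentially large precisely when $E$ is a slowly growing (hence neutral) direction. Controlling this fiber growth is the entire content of the proposition; it is why $\widehat U_0$ must be \emph{constructed} from $\widehat\mu_0$ rather than chosen arbitrarily, why the frequency condition $\Card\{\widehat f^j(\xi)\in\widehat U_0\}>(1-\gamma)n$ appears in the set being covered, and why, as the paper remarks immediately after the statement, $\varepsilon$ must be taken much smaller than $\widehat\varepsilon$. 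Your per-step covering count ``$O(\widehat\varepsilon^{-1}\exp(N\eta/10))$'' simply assumes the lifted curve stays at bounded length, which is what needs proof.

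Two further quantitative points would sink the argument even if the fiber issue were resolved. First, a per-$N$-block multiplicative factor containing $\widehat\varepsilon^{-1}$ contributes $\tfrac1N\log(\widehat\varepsilon^{-1})$ to the exponential growth rate of $\Card(\cR)$; since the proposition must hold for $\widehat\varepsilon$ arbitrarily small with $N$ fixed, this rate is unbounded and cannot be ``absorbed by taking $\bar n_0$ large'' (only a single overall constant could be). Second, your chain-rule display is wrong: neutrality is a condition on Birkhoff sums from time $0$, namely $S_i\varphi(\zeta)\le \tfrac{\eta}{10} i$, and gives no lower bound on $S_{n_j}\varphi$; hence $\|Df^i|_{E_{n_j}}\|=e^{S_{n_j+i}\varphi-S_{n_j}\varphi}$ need not be $\le e^{i\eta/10}$ — a sub-segment of a neutral segment starting at time $n_j$ need not be neutral, and the incremental expansion over a single $N$-block is a priori only bounded by $\|Df^N\|_{\sup}$. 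So even the tangential bookkeeping does not close as written.
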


In the previous statement $\eps$ has to be chosen much smaller than $\widehat \eps$.
This is the reason why we need to work with different scales on $M$ and $\widehat M$.

\begin{proposition}[Reparametrization during typical orbit segments]\label{prop2}
Let us take $\varepsilon,\widehat\eps>0$ small and $N$ large and $\gamma>0$.
For any $\widehat f$-invariant probability measure $\widehat \mu_1$,
if $n_1$ is large enough, then there exists an open set $\widehat U_1$ satisfying $\widehat \mu_1(\widehat U_1)>1-\gamma^2$,
$\widehat \mu_1(\partial\widehat U_1)=0$ and the following:

For any regular curve $\sigma_0$ with $C^r$ size less than $(\varepsilon,\widehat\varepsilon)$,
there is a family $\cR$ of reparametrizations over $\widehat \sigma_0^{-1}(\widehat U_1)$
which is $(N,\varepsilon,\widehat \eps)$-admissible up to time $n_1$
and with cardinality
$$\Card(\cR)\leq \exp\big[n_1(\overline h(f,\pi_*\widehat \mu_1)+\tfrac{\lambda(\widehat f)}{r-1}+\eta)\big]. $$
\end{proposition}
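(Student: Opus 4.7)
The plan is to combine an entropy-based count of typical $\widehat f$-itineraries of length $n_1$ through a sufficiently fine partition of $\widehat M$ with iterated applications of Yomdin's theorem (Theorem~\ref{t-yomdin}) along each such itinerary, following the scheme of \cite[Section~5]{BCS}.

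Take $N$ (from the statement) so large that $\tfrac1N\log\|D\widehat f^N\|_{\text{sup}}<\lambda(\widehat f)+\eta/10$ and $\Upsilon^{1/N}<e^{\eta/10}$, where $\Upsilon$ is the constant of Theorem~\ref{t-yomdin} applied to $\widehat f^N$; take $\widehat\eps$ below the Yomdin threshold for $f^N$ and $\eps$ much smaller than $\widehat\eps$, as in Proposition~\ref{prop1}. Choose a finite partition $\mathcal P=\{P_1,\dots,P_q\}$ of $\widehat M$ by pieces of diameter less than $\widehat\eps/2$ with $\widehat\mu_1$-null boundaries, fine enough that for $\widehat\mu_1$-a.e.\ ergodic component $\widehat\mu_1'$ the partition entropy $h_{\mathcal P}(\widehat f,\widehat\mu_1')$ is within $\eta/10$ of $h(\widehat f,\widehat\mu_1')$. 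In the surface case the fibers of $\pi\colon\widehat M\to M$ are one-dimensional, so $h(\widehat f,\widehat\mu_1')=h(f,\pi_*\widehat\mu_1')$ and the essential supremum over ergodic components coincides with $\overline h(f,\pi_*\widehat\mu_1)$. A Shannon--McMillan--Breiman argument applied to the ergodic decomposition (together with Egorov's theorem to make the convergence uniform) then gives, for $n_1$ large enough, a set $\widehat W\subset\widehat M$ of $\widehat\mu_1$-measure $>1-\gamma^2$ in which every $\mathcal P^{n_1}$-cylinder has $\widehat\mu_1$-mass $\geq\exp[-n_1(\overline h(f,\pi_*\widehat\mu_1)+\eta/3)]$; hence the number of such cylinders is at most $\exp[n_1(\overline h(f,\pi_*\widehat\mu_1)+\eta/3)]$. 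Let $\widehat U_1$ be the interior of the union of these cylinders.

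Given a regular $\sigma_0$ of $C^r$ size less than $(\eps,\widehat\eps)$ and one such cylinder $\mathcal C$, iterate Theorem~\ref{t-yomdin} applied to $\widehat f^N$: each coarse step produces at most $\Upsilon\,\|D\widehat f^N\|_{\text{sup}}^{1/(r-1)}$ affine reparametrizations $\psi$ whose image $\widehat f^{(j+1)N}\circ\widehat\sigma_0\circ\psi$ lies in the cell prescribed by $\mathcal C$ at step $(j+1)N$, while $f^{(j+1)N}\circ\sigma_0\circ\psi$ has $C^r$ size less than $(\eps,\widehat\eps)$; taking $n_j:=jN$ witnesses $(N,\eps,\widehat\eps)$-admissibility up to time $n_1$ with gap exactly $N$. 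After $n_1/N$ coarse steps the per-cylinder count is bounded by $\big(\Upsilon\,\|D\widehat f^N\|_{\text{sup}}^{1/(r-1)}\big)^{n_1/N}\leq\exp[n_1(\lambda(\widehat f)/(r-1)+\eta/3)]$; multiplying by the cylinder count from the previous paragraph yields the desired bound $\exp[n_1(\overline h(f,\pi_*\widehat\mu_1)+\lambda(\widehat f)/(r-1)+\eta)]$.

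The main obstacle is the careful handling of the ergodic decomposition of $\widehat\mu_1$: controlling partition entropies uniformly enough to recover the essential-supremum definition of $\overline h(f,\pi_*\widehat\mu_1)$, together with the identification $h(\widehat f,\widehat\mu_1')=h(f,\pi_*\widehat\mu_1')$, which depends on the surface hypothesis (one-dimensional fibers of $\pi$, hence zero fiber entropy). A secondary technical issue is the scale mismatch between $\widehat\eps$ (on $\widehat M$) and $\eps\ll\widehat\eps$ (on $M$), resolved as in Proposition~\ref{prop1} by choosing $\eps$ small enough to absorb the $N$-fold $C^r$-distortion of $f$.
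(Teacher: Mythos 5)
The paper gives no proof of Proposition~\ref{prop2}; it is quoted from \cite[Section 5]{BCS}, and your reconstruction follows essentially the same route as that source: count the relevant itineraries of length $n_1$ by a Shannon--McMillan--Breiman/Egorov argument (crucially using that for a non-ergodic measure the SMB limit is the entropy of the ergodic component while the cylinder mass is taken with respect to the full measure $\widehat\mu_1$, which is exactly what makes the essential supremum $\overline h$ appear), and then apply the Yomdin reparametrization bound in blocks of length $N$ along each itinerary, multiplying the two counts. Your sketch is correct in structure, and you rightly flag the two remaining technical points (the two scales $\eps\ll\widehat\eps$ and the identification $h(\widehat f,\widehat\mu_1')=h(f,\pi_*\widehat\mu_1')$ via the zero fiber entropy of the projectivization), which are handled the same way in \cite{BCS}.
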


\subsection{Reparametrizations: construction}
Let $\sigma$ be the curve from section \ref{s.Reduction-to-curve}, and let  $\widehat \mu=(1-\beta) \widehat \mu_0 + \beta \widehat \mu_1$ be the measure from section  \ref{s.subsets}.
Set $\mu_1:=\pi_*(\widehat \mu_1)$.
\begin{proposition}\label{p.upper}
There exist families of reparametrizations $\mathcal{R}_k$ of $\sigma$ over $T'_{n_k}$
which are $(N,\varepsilon,\widehat \varepsilon)$-admissible up to the time $n_k$ for some numbers $N$ (arbitrarily large) and  $\eps,\widehat \varepsilon$ (arbitrarily small), and which  satisfy for all $k$ large
\begin{equation}\label{e.upper}
\Card(\mathcal{R}_k)\leq \exp(\beta h(f,\mu_1)n_k+ c(\eta,\gamma,r)n_k),
\end{equation}
where $c(\eta,\gamma,r)\to 0$ as $\eta,\gamma\to 0$ and $r\to \infty$.
\end{proposition}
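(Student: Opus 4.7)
The plan is to decompose each orbit segment $(\xi_k,\hf(\xi_k),\dots,\hf^{n_k-1}(\xi_k))$ into pieces of two types: maximal $\alpha$-neutral sub-segments (as in Section~\ref{s.Neutral-decomposition}) and their complementary sub-segments. By Proposition~\ref{p.decomposition}(b) and Lemma~\ref{l.sequence}, the neutral pieces carry empirical mass close to $(1-\beta)\widehat\mu_0$, while the complementary pieces carry empirical mass close to $\beta\widehat\mu_1$. On each piece I will build a family of reparametrizations (using Proposition~\ref{prop1} on neutral pieces and Proposition~\ref{prop2} on the complementary ones), and I will glue consecutive families across boundaries using Yomdin's Theorem~\ref{t-yomdin}.

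For the neutral pieces, I apply Proposition~\ref{prop1} with input $\widehat \mu_0$ to extract $\varepsilon,\widehat\eps,\bar n_0$ and the open set $\widehat U_0$ with $\widehat \mu_0(\widehat U_0)>1-\gamma^2$. Since $p_k^{\alpha,L}(\xi_k)\to (1-\beta)\widehat\mu_0$, for all $k$ sufficiently large a definite fraction of each maximal $\alpha$-neutral block of length $m\ge L$ visits $\widehat U_0$ with frequency at least $1-\gamma$; Proposition~\ref{prop1} then provides an $(N,\varepsilon,\widehat\eps)$-admissible family of reparametrizations on that block with at most $\exp[m(\tfrac{\lambda(\hf)}{r-1}+\eta)]$ elements. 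Summing block lengths gives total contribution of order $\exp[(1-\beta)n_k(\tfrac{\lambda(\hf)}{r-1}+\eta)]$ from the neutral part.

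For the complementary pieces, whose total length is approximately $\beta n_k$ and whose empirical distribution approaches $\widehat\mu_1$, I consider the ergodic decomposition $\widehat\mu_1=\int \widehat\mu_{1,\omega}\,d\nu(\omega)$, choose a fine partition of the space of ergodic components, and further subdivide the complementary pieces into sub-blocks whose empirical distributions concentrate near a single component $\widehat\mu_{1,\omega}$. On each such sub-block of length $m_\omega$ I invoke Proposition~\ref{prop2} with $\widehat\mu_{1,\omega}$ in place of $\widehat\mu_1$; since the chosen measure is ergodic, $\overline h(f,\pi_*\widehat\mu_{1,\omega})=h(f,\pi_*\widehat\mu_{1,\omega})$, and the number of reparametrizations is at most $\exp[m_\omega(h(f,\pi_*\widehat\mu_{1,\omega})+\tfrac{\lambda(\hf)}{r-1}+\eta)]$. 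Multiplying over sub-blocks and using the affinity of entropy under the ergodic decomposition, $\int h(f,\pi_*\widehat\mu_{1,\omega})\,d\nu(\omega)=h(f,\mu_1)$, the non-neutral contribution is bounded by $\exp[\beta n_k(h(f,\mu_1)+\tfrac{\lambda(\hf)}{r-1}+\eta)+o(n_k)]$.

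Finally, I concatenate the per-block families. Each boundary between consecutive blocks costs one application of Yomdin's Theorem~\ref{t-yomdin}, multiplying the count by $\Upsilon\|D\hf\|_{\rm sup}^{1/(r-1)}$; provided $L$ and $N$ are large, the number of blocks is $o(n_k)$, so these boundary factors contribute only $o(n_k)$ in the exponent. Combining the three estimates produces $\Card(\mathcal R_k)\le \exp[\beta h(f,\mu_1)n_k+c(\eta,\gamma,r)n_k]$ with $c(\eta,\gamma,r)=\tfrac{\lambda(\hf)}{r-1}+O(\eta)+O(\gamma)$, which vanishes as $\eta,\gamma\to 0$ and $r\to\infty$. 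The main obstacle is replacing the $\overline h$ of Proposition~\ref{prop2} by the genuine entropy $h(f,\mu_1)$: this requires the ergodic decomposition to be implemented at the level of finite orbit segments, splitting the non-neutral part into sub-blocks that each approximate a single ergodic component while keeping the total number of blocks subexponential so that the Yomdin boundary cost stays negligible.
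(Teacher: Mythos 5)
Your overall architecture (neutral blocks via Proposition~\ref{prop1}, typical blocks via Proposition~\ref{prop2}, Yomdin gluing for the leftovers) is the same as the paper's, but the proposal stops exactly where the real work begins, and you say so yourself: the ``main obstacle'' you name in your last sentence --- implementing the ergodic decomposition of $\widehat\mu_1$ at the level of finite orbit segments so that $\overline h$ can be replaced by $h(f,\mu_1)$ --- is precisely the step the proof has to supply, and your sketch does not supply it. Writing $\widehat\mu_1=\int\widehat\mu_{1,\omega}\,d\nu(\omega)$ and ``choosing a fine partition of the space of ergodic components'' is not yet an argument: you need a \emph{finite, checkable} criterion, applied to a length-$m$ piece of the orbit of $\xi_k$, that decides which component the piece is tracking, and you need the count of ways the orbit can be cut up to stay subexponential. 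The paper does this by grouping the ergodic components into only $\ell\asymp 1/\eta$ classes according to entropy level (so $\widehat\mu_1=\sum a_c\widehat\mu_{1,c}$ with the components of $\mu_{1,c}$ having entropy in $[h_c,h_c+\eta)$, whence $\overline h(f,\mu_{1,c})\approx h(f,\mu_{1,c})\approx h_c$), exploiting the mutual singularity of the $\widehat\mu_{1,c}$ to choose open sets $\widehat U_{1,c}$ with $\widehat\mu_{1,c}(\widehat U_{1,c})>1-\gamma^2$ but $\widehat\mu_{1,c'}(\widehat U_{1,c})<\gamma^2$ for $c'\neq c$, and assigning color $c$ to a segment simply by the condition $\widehat f^t(\xi)\in\widehat U_{1,c}$ together with the prescribed length $n_{1,c}$. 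The distinctness of the $n_{1,c}$ and the affinity bound $\sum a_ch_c\le h(f,\mu_1)$ then convert the per-segment bounds of Proposition~\ref{prop2} into the global exponent $\beta h(f,\mu_1)$. None of this is recoverable from ``concentrate near a single component'' without further construction.

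Two further points are glossed over. First, the family $\cR_k$ must cover all of $T'_{n_k}$, and different parameters $s$ give different decompositions of their orbits; one must therefore sum over the possible decomposition \emph{types} and show there are at most $\exp[H(10\gamma)n_k]$ of them (Lemma~\ref{l.types}), which in turn forces all non-filler segments to have length at least $\min\{L,1/\gamma\}$. Your proposal counts reparametrizations for a single orbit only. Second, your claim that every maximal $\alpha$-neutral block of length $\ge L$ visits $\widehat U_0$ with frequency $\ge 1-\gamma$ is not justified and is not what the paper proves: blocks failing this density condition are simply declared fillers, and it is the \emph{total} length of such exceptional iterates that is controlled (by $O(\gamma n_k)$, via Proposition~\ref{p.decomposition}(b) and the choice of $\widehat V_0,\widehat V_1$), not the behavior of each individual block. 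Relatedly, the boundary/filler cost is $O(\gamma n_k)$ in the exponent, not $o(n_k)$; this is harmless only because it is absorbed into $c(\eta,\gamma,r)$ and one lets $\gamma\to0$ at the very end.
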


\begin{proof}  We follow~\cite[section 7.4]{BCS} closely, using the same division into steps as there.
\bigskip

\noindent
\paragraph{\bf Steps 1--5.} This is essentially the same as in ~\cite[section 7.4]{BCS}, so we only sketch the construction and refer the reader to \cite{BCS} for details:
\begin{enumerate}[--]
\item We fix $r\geq 2$ large and $\eta>0$ small.
\item We decompose  $\widehat \mu_1=\sum a_c\widehat \mu_{1,c}$ such that $\sum a_c=1$
into  $\ell \asymp 1/\eta$ mutually singular  invariant measures satisfying:
almost all the ergodic components of $\mu_{1,c}=\pi_*\widehat \mu_{1,c}$ have their entropy in an interval $[h_c,h_c+\eta)$, for each $c$ (so $\overline{h}(f,\mu_{1,c})\approx h(f,\mu_{1,c})\approx h_c$ up to error  $\eta$).
\item We choose $0<\gamma\ll \eta$ small with $\gamma<\gamma_0(r, f,\eta)$ and $N$ large as in Propositions \ref{prop1}~and~\ref{prop2}.
\item We apply Proposition~\ref{prop1} to $\widehat \mu_0$ and get $\eps,\widehat \eps$ small (small enough so that Proposition~\ref{prop2} applies), $\bar n_0$ and
an open set $\widehat U_0$ with  $\widehat \mu(\partial U_0)=0$.
\item We apply Proposition~\ref{prop2} to each measure $\widehat \mu_{1,c}$
and large distinct integers $n_{1,c}>1/\gamma$. We get open sets $\widehat U_{1,c}$ with $\widehat \mu_{1,c}(\widehat U_{1,c})>1-\gamma^2$ and  $\widehat \mu(\partial \widehat U_{1,c})=0$.
We can reduce them so that for $c\neq c'$ and $0\leq j\leq n_{1,c}$,
we have $\widehat \mu_{1,c}(\widehat U_{1,c'})<\gamma^2$ and
the closures of $\widehat f^j(\widehat U_{1,c})$ and $\widehat U_{1,c'}$ are disjoint.
\item We choose small neighborhoods $\widehat V_0,\widehat V_1$ of the measures $(1-\beta) \widehat \mu_0$  and  $\beta \widehat \mu_1$
such that all measures in a same neighborhood $\widehat V_i$ give the same mass to $\widehat M$ up to an error smaller than $\gamma^2$,
and similarly give the same mass to each set $\widehat U_0$, $\widehat U_{1,1},\dots,\widehat U_{1,\ell}$ up to $\gamma^2$.
\item We choose the neutral parameters $0<\alpha<\tfrac \eta {10}$
and $L> 2\max\{\bar n_0,n_{1,1},\dots,n_{1,\ell}\}/\gamma$ such that for $k$ large enough,
item (b) of Proposition~\ref{p.decomposition} is satisfied for $\widehat V_0,\widehat V_1$.
\end{enumerate}
\bigskip

\noindent
\paragraph{\bf Step 6.} We decompose each non-periodic orbit $(\xi,\dots,\widehat f^{n_k}(\xi))$
into pairwise disjoint subsegments $(\widehat f^{t}(\xi),\dots,\widehat f^{t'-1}(\xi))$, falling into one of the following classes:
\begin{enumerate}[(a)]
\item \emph{Blank segments}: $(\alpha,L)$-neutral, with $\Card\{\widehat f^j(\xi)\in \widehat U_0,\; t\leq j<t'\}>(1-\gamma)(t'-t)$.
\item \emph{Segments with color $c$}: such that $\widehat f^t(\xi)\in \widehat U_{1,c}$ and $t'-t=n_{1,c}$.
\item \emph{Fillers}: orbit segments with length $1$.
\end{enumerate}
Since $n_{1,c}$ are large and distinct, the length of a segment determines its class.

The construction is the same as in \cite[section 7.4]{BCS}, so we only sketch it and refer the reader to \cite{BCS} for details. First, one considers all the $(\alpha,L)$-neutral sub-segments
of $(\xi,\dots,\widehat f^{n_k}(\xi))$ which are maximal for the inclusion.
Those which meet $\widehat U_0$ with a density larger than $1-\gamma$ are tagged  ``blank,"
the others are declared to be made of fillers.
One then considers the complement of the union of all the $(\alpha,L)$-neutral sub-segments:
Scanning from the earliest iterate onwards, one  inductively  selects segments which qualify to be colored segments, and which are disjoint from the segments that have been previously identified.
The iterates that have not been selected  at the end of this process  are declared to be fillers.

\begin{lemma}\label{l.size}
For any $k$ large enough and $\xi\in \widehat \sigma(T'_{n_k})$,
inside the orbit segment $(\xi,\dots,\widehat f^{n_k}(\xi))$,
\begin{enumerate}[(a)]
\item blank segments have total length at least $(1-\beta) n-4\gamma n_k$,
\item segments with color $c$ have total length at most  $\beta a_c n+\gamma n_k$,
\item fillers have total length at most $6\gamma n_k$.
\end{enumerate}
\end{lemma}
\begin{proof}
Item (b) of Proposition~\ref{p.decomposition} and Lemma~\ref{l.sequence}
imply that for $k$ large enough, any point $\xi\in \widehat \sigma(T'_{n_k})$
satisfies $p^{\alpha,L}_k(\xi)\in \widehat V_0$ and $p_k(\xi)-p^{\alpha,L}_k(\xi)\in \widehat V_1$.
Hence the proportion of the orbit segment $(\xi,\dots,\widehat f^{n_k}(\xi))$
and of its $(\alpha,L)$-neutral part spent in each set $\widehat U_0$,
$\widehat U_{1,c}$ is equal to the masses given to these sets by $(1-\beta)\widehat \mu_0$
and $\beta\widehat \mu_1$, up to an error smaller than $\gamma^2$.

We can now repeat the argument in ~\cite[Lemma 7.5]{BCS} verbatim, using
$\widehat \mu_{0}(\widehat U_{1,c})>1-\gamma^2$, $\mu_{1,c}(\widehat U_{1,c})>1-\gamma^2$,
$\mu_{1,c}(\widehat U_{1,c'})<\gamma^2$ ($c\ne c'$), \SY{$\gamma<\gamma_0(r,f,\eta)$}  and $\gamma\ll\eta$.
\end{proof}
\smallskip

\noindent
\paragraph{\bf Step 7.} The non-periodic orbits $(\xi,\dots,\widehat f^{n_k-1}(\xi))$
have been decomposed into segments which begin at iterates $\widehat f^{t_0}(\xi)$,
\dots, $\widehat f^{t_m}(\xi)$. The sequence $\theta=(t_0,\dots,t_m)$ is the \emph{type of the decomposition}.
(We recall that $t_{i+1}-t_i$ determines the class of the segment starting at $t_i$.)

\begin{lemma}\label{l.types}
Let $H(t):=t\log \tfrac 1 t+(1-t)\log\tfrac 1{1-t}.$
For $k$ large, the number of types of decomposition $\theta$ of the orbits $(\xi,\dots,\widehat f^{n_k-1}(\xi))$
with $\xi\in \widehat \sigma(T'_{n_k})$ is bounded by $\exp[H(10\gamma)n_k]$.
\end{lemma}
\begin{proof}[Idea of the proof]
There are at most $6\gamma n_k$ fillers, and segments of other classes have lengths $\geq 1/\gamma$,
hence every type $\theta$ has at most $\lfloor 8\gamma n_k\rfloor+1$ elements.
So the number of possible $\theta$ is bounded by the number of representations of $n_k$ as an ordered sum of  $m\leq \lfloor 8\gamma n_k\rfloor$ positive numbers. The lemma follows from a standard  combinatorial computation,  and  De Moivre's estimate for $n\choose pn$. See \cite[Claim 7.6]{BCS} for details.
\end{proof}

\noindent
\paragraph{\bf [Step 8.} This step is not needed in the present proof (since $n_k$, $T'_{n_k}$ are already built).{\bf ]}
\medskip

\noindent
\paragraph{\bf Step 9.}
We fix a type $\theta$ and set $T_{n_k}^{\theta}=\{t\in T'_{n_k}: 
(\widehat f^i\sigma(t))_{i=0}^{n_k-1}\text{  has a decomposition of type $\theta$}\}$.

\begin{lemma}
There exists a family $\cR^\theta_k$ of reparametrizations of $\sigma$ over the set
$T_{n_k}^{\theta}$ that are $(N,\varepsilon,\widehat \varepsilon)$-admissible up to time $n_k$
with $
\Card(\mathcal R_n^\theta)\leq \exp\bigl(\sum_{i=1}^{m}\kappa_i(\theta)(t_{i}-t_{i-1})\bigr),
$ where
\begin{align*}
\kappa_i(\theta)&:=\begin{cases}
(r-1)^{-1}\lambda(\widehat f)+\eta & \text{if $t_{i}-t_{i-1}\geq L$},\qquad \text{(blank segment)},\\
h_{c}+(r-1)^{-1}\lambda(\widehat f)+2\eta & \text{if $t_{i}-t_{i-1}=n_{1,c}$},\quad \text{(c-colored segment)},\\
\text{some $C$ depending only on $r$ and $f$} & \text{if $t_{i}-t_{i-1}=1$},\qquad \text{(filler)}.
\end{cases}
\end{align*}
\end{lemma}
\begin{proof}[Idea of the proof]
The construction is identical to the step 9 in~\cite{BCS}.
One builds, inductively on $i$, a family of reparametrizations which are $(N,\varepsilon,\widehat \varepsilon)$-admissible up to the time $t_i$.
Given such a reparametrization $\psi$ which is admissible up to the time $t_{i-1}$,
one considers the curve $\sigma_0=f^{t_{i-1}}\circ \sigma\circ \psi$ and applies Proposition~\ref{prop1}, Proposition~\ref{prop2}, or Theorem~\ref{t-yomdin}, depending on the class of the orbit segment between times $t_{i-1}$ and $t_i$ (respectively blank, colored or filler).
One gets a family of reparametrizations $\varphi$ of $\sigma_0$ which are admissible up to time $t_i-t_{i-1}$ over the set $\psi^{-1}(T^{\theta}_{n_k})$.
The set of compositions $\psi\circ\varphi$ defines a family of reparametrizations of $\sigma$ that are admissible up to time $t_i$.
\end{proof}
\smallskip

\noindent
\paragraph{\bf Step 10.}
We estimate the cardinality of a suitable family of reparametrizations $\cR_k$ as in~\cite{BCS}.
For each type $\theta$, the bounds on the total lengths of the different classes of segments (Lemma~\ref{l.size}),
the estimates $\sum a_c h_c\leq h(f,\mu_1)$ and $\gamma\ll \eta$ together give:
$$
\displaystyle
\Card(\mathcal R_n^\theta)\leq \exp\biggl(\beta h(f,\mu_1)n+\frac{\lambda(\widehat f)}{r-1}n+ 4\eta n \biggr).
$$

We can take $\cR_k:=\bigcup_{\theta}\cR^\theta_{k}$.
Its cardinality is at most the above bound multiplied by the number of types, estimated in Lemma~\ref{l.types}. This concludes the proof of Proposition~\ref{p.upper}.
\end{proof}

\subsection{Summary up to this point and conclusion of the proof}
Fix a regular $C^\infty$ curve $\sigma:[0,1]\to M$ so that $\limsup\frac{1}{n}\log \|Df^n|_{\sigma}(\sigma(s))\|>0$ for a set of positive Lebesgue measure of parameters $s\in [0,1]$ (Lemma \ref{l.curve}).

Let $\overline{\lambda}:=\mathrm{ess\,sup}_{s\in [0,1]}\limsup \frac{1}{n}\log \|Df^n|_{\sigma}(\sigma(s))\|>0$,  and choose sequences $\rho_k\uparrow 1$, $\lambda^{min}_k\uparrow\overline{\lambda}$ and $\lambda^{max}_k\downarrow\overline{\lambda}$.
In sections  \ref{s.Reduction-to-curve}--\ref{s.subsets} we constructed  $n_k\uparrow\infty$, and a sequence of sets
\begin{multline*}
T_{n_k}'\subset \{s\in [0,1]:\lambda_{k}^{min}\leq \frac{1}{n_k}\log\|Df^{n_k}|_{\sigma}(\sigma(s))\|\leq \lambda_{k}^{max},\\
\sigma(s)\text{ is not $f$-periodic and }\forall 0\leq i\leq n_k, \,
\|Df^i|_{\sigma}(f^{n_k-i}\sigma(s))\|\geq 1\}
\end{multline*}
with Lebesgue measure $|T_{n_k}'|\geq \rho_k^{2k}$.

We then took points  $\xi_{n_k}\in \widehat{\sigma}(T_{n_k}')$, and replaced $n_k$ by a further subsequence (which abusing notation we also called $n_k$), for which  the $\widehat{f}$-empirical measures $p_k(\xi_{n_k})$ satisfy
$$
p_k(\xi_{n_k})\xrightarrow[k\to\infty]{\text{weak-}\ast}\widehat{\mu}=(1-\beta)\widehat{\mu}_0+\beta\widehat{\mu}_1,
$$
with $\widehat{\mu}_0$, $\widehat{\mu}_1$  as in Proposition \ref{p.decomposition}. We saw in Corollary \ref{c.decomposition} that $\beta>0$ and that the projection of $\widehat{\mu}_1$ to $M$ is an $f$-invariant measure such that
\begin{equation}\label{e.lambda-plus}
\beta\lambda^+(\mu_1)=\overline{\lambda}.
\end{equation}

We then analyzed the Yomdin-theoretic $C^r$ complexity of $\sigma|_{T_{n_k}'}$, by estimating the size of the family $\mathcal R_k$ of admissible reparametrizations needed to cover $\sigma(T_{n_k}')$. Using tools developed in \cite{BCS}, we were able to show the existence of families $\mathcal R_k$ such that for $k\gg 1$,
\begin{equation}\label{e.double}
\rho_k^{2n_k}\exp(\lambda_k^{min} n_k)\min\|\sigma'\|\leq \mathrm{Card}(\mathcal R_k)\leq \exp(\beta h(f,\mu_1)n_k+c(\eta,\gamma,r)n_k),
\end{equation}
with $\eta, \gamma$ arbitrarily small, $r$ arbitrarily large, and $c(\eta,\gamma,r)\to 0$ as $\gamma,\eta\to 0, r\to\infty$, see \eqref{e.lower},\eqref{e.upper}.
Since  $\lambda_k^{min}\to \overline{\lambda}$ and $\rho_k\to 1$, \eqref{e.double} implies that
$\overline{\lambda}\leq \beta h(f,\mu_1).$

Combining this with \eqref{e.lambda-plus} and with Ruelle's inequality, we obtain:
$$\beta \lambda^+(\mu_1) =\overline \lambda\leq \beta h(f,\mu_1)\leq\beta \lambda^+(\mu_1) \qquad\text{ with }\overline\lambda>0.$$
Hence $\lambda^+(\mu_1) =h(f,\mu_1)>0$. Note that almost every ergodic component $\mu_1'$ of $\mu_1$ satisfies $\lambda^+(\mu_1')=h(f,\mu_1')$ by Ruelle's inequality and $\lambda^+(\mu_1')>0$ by Proposition~\ref{p.decomposition} (d), hence is an SRB measure.
\qed

\bigskip
\bigskip

\hspace{-1.5cm}
\begin{tabular}{l l l l l}
\emph{J\'er\^ome Buzzi}
& &\emph{Sylvain Crovisier}
& &\emph{Omri Sarig}\\
Laboratoire de Math\'ematiques
&& Laboratoire de Math\'ematiques
&& Faculty of Mathematics\\
 d'Orsay, CNRS - UMR 8628
&&  d'Orsay, CNRS - UMR 8628
&&  and Computer Science\\
Universit\'e Paris-Saclay
&&  Universit\'e Paris-Saclay
&& The Weizmann Institute of Science\\
Orsay 91405, France
&& Orsay 91405, France
&& Rehovot, 7610001,  Israel\\
\end{tabular}

\end{document}